\documentclass[11pt,reqno]{amsart}

\usepackage[a4paper,margin=1.08in]{geometry}
\usepackage[T1]{fontenc}
\usepackage{lmodern}
\usepackage{microtype}
\usepackage{mathtools,amssymb,amsfonts}
\usepackage{booktabs,array}
\usepackage{enumitem}
\usepackage{xcolor}
\usepackage[colorlinks=true,linkcolor=blue!48!black,citecolor=green!35!black,urlcolor=blue!48!black]{hyperref}
\usepackage{aliascnt}
\usepackage[nameinlink,capitalise,noabbrev]{cleveref}
\usepackage{setspace}

\hypersetup{
	pdftitle={Davenport Constants of Finite Abelian Groups},
	pdfauthor={Dongchun Han and Hanbin Zhang},
	pdfsubject={Davenport constants of finite abelian groups},
	pdfkeywords={Davenport constant, zero-sum sequences, finite abelian groups, group algebras}
}

\allowdisplaybreaks
\setlist{itemsep=0.18em,topsep=0.28em}

\numberwithin{equation}{section}

\newtheorem{theorem}{Theorem}[section]
\newaliascnt{proposition}{theorem}
\newtheorem{proposition}[proposition]{Proposition}
\aliascntresetthe{proposition}
\newaliascnt{lemma}{theorem}
\newtheorem{lemma}[lemma]{Lemma}
\aliascntresetthe{lemma}
\newaliascnt{corollary}{theorem}

\aliascntresetthe{corollary}
\newaliascnt{claim}{theorem}

\aliascntresetthe{claim}
\theoremstyle{definition}
\newaliascnt{definition}{theorem}

\aliascntresetthe{definition}
\newaliascnt{remark}{theorem}

\aliascntresetthe{remark}
\newaliascnt{question}{theorem}

\aliascntresetthe{question}

\crefname{theorem}{Theorem}{Theorems}
\Crefname{theorem}{Theorem}{Theorems}
\crefname{proposition}{Proposition}{Propositions}
\Crefname{proposition}{Proposition}{Propositions}
\crefname{lemma}{Lemma}{Lemmas}
\Crefname{lemma}{Lemma}{Lemmas}
\crefname{corollary}{Corollary}{Corollaries}
\Crefname{corollary}{Corollary}{Corollaries}
\crefname{claim}{Claim}{Claims}
\Crefname{claim}{Claim}{Claims}
\crefname{definition}{Definition}{Definitions}
\Crefname{definition}{Definition}{Definitions}
\crefname{remark}{Remark}{Remarks}
\Crefname{remark}{Remark}{Remarks}
\crefname{question}{Question}{Questions}
\Crefname{question}{Question}{Questions}

\newcommand{\D}{\mathsf D}
\newcommand{\Ds}{\mathsf D^{*}}
\newcommand{\F}{\mathbb F}

\newcommand{\rank}{\operatorname{rank}}
\newcommand{\expn}{\operatorname{exp}}
\newcommand{\Hom}{\operatorname{Hom}}
\newcommand{\KL}{\mathcal D}

\newcommand{\Def}{\Delta}

\newcommand{\bdot}{\boldsymbol{\cdot}}

\title[Davenport constants of finite abelian groups]{Davenport Constants of Finite Abelian Groups}
\author{Dongchun Han}
\address{School of Mathematics, Southwest Jiaotong University, Chengdu 610000, P.R. China}
\email{handongchun@swjtu.edu.cn}

\author{Hanbin Zhang}
\address{School of Mathematics (Zhuhai), Sun Yat-sen University, Zhuhai 519082, Guangdong, P.R. China}
\email{zhanghb68@mail.sysu.edu.cn}

\subjclass[2020]{11B75, 20K01, 11N45}
\keywords{Davenport constant, zero-sum sequences, finite abelian groups, group algebras}

\begin{document}
	\raggedbottom

\begin{abstract}
A central problem in zero-sum theory is to determine the
Davenport constant of a finite abelian group. For a finite abelian group
$G\cong C_{n_1}\oplus\cdots\oplus C_{n_r}$, with
$1<n_1\mid\cdots\mid n_r$, let $\D(G)$ be the least integer $t$ such that
every sequence of $t$ elements of $G$ contains a nonempty zero-sum
subsequence, and denote $\Ds(G)\coloneqq 1+\sum_{i=1}^r(n_i-1)$. The standard
construction shows that $\D(G)\ge\Ds(G)$. Equality holds for several important
classes of groups but fails in general, and even the question whether
$\D(G)/\Ds(G)$ is bounded by an absolute constant remains open. In this paper, we prove
the upper bound
\[
 \D(G)<4.86\,\expn(G)\frac{\log|G|}{\log\expn(G)}
\]
for every nontrivial finite abelian group. We also prove that, for almost all finite abelian groups (counting isomorphism classes
by order),
\[
 \D(G)\sim\expn(G)\frac{\log|G|}{\log\expn(G)}\sim\Ds(G).
\]
We also study the difference $\D(G)-\Ds(G)$: for every function $L$ with
$L(x)\to\infty$, almost all groups satisfy
$\D(G)-\Ds(G)\le L(\Ds(G))$. However, for every fixed $C\ge0$, the groups with $\D(G)-\Ds(G)>C$ have positive lower density.
\end{abstract}
\maketitle

\section{Introduction}\label{sec:introduction}

Throughout, $G$ denotes a nontrivial finite abelian group, written
additively. For a finite abelian group $G$, written additively, the Davenport constant
$\D(G)$ is the least positive integer $t$ such that every sequence of $t$
elements of $G$ contains a nonempty zero-sum subsequence. Write
$G\cong C_{n_1}\oplus\cdots\oplus C_{n_r}$, where
$1<n_1\mid\cdots\mid n_r$, and put $\expn(G)=n_r$. The standard
construction gives
\begin{equation}\label{eq:Dstar}
 \D(G)\ge\Ds(G):=1+\sum_{i=1}^r(n_i-1).
\end{equation}
Determining $\D(G)$ is a basic problem of zero-sum theory, originating in
questions of nonunique factorization in algebraic number fields; see
\cite{GaoGeroldinger2006,GeroldingerHalterKoch2006,NarkiewiczSliwa1982,Rogers1963}.
It is related to the Erd\H{o}s--Ginzburg--Ziv theorem \cite{EGZ} and has
connections with invariant theory
\cite{CziszterDomokosGeroldinger,HanZhang2019,Schmid1991} and coding theory
\cite{MarchanOSS2015,PlagneS2011}.

In the 1960s, $\D(G)=\Ds(G)$ was verified for $p$-groups and for groups
of rank at most two; see \cite{EmdeBoasKruyswijk1969,Olson1969,Olson1969II}, the historical accounts in
~\cite[Theorem 3.1]{GaoGeroldinger2006}, and
~\cite[Theorems 5.5.9 and 5.8.3]{GeroldingerHalterKoch2006}. In general, the exact values of $\D(G)$ are unknown. A standing conjecture is that
\(
 \D(G)=\Ds(G)
\) for $G=C_n^r$ or $\rank(G)=3$;
 see \cite[Conjecture 3.5]{GaoGeroldinger2006} (and the references therein), and we also refer to \cite{BisMaz,GirardSchmid2019,GYW2026} for some recent studies. On the other hand, the equality
$\D(G)=\Ds(G)$ fails in general, which was first noted in \cite{EmdeBoasKruyswijk1969}. Geroldinger and Schneider \cite{GeroldingerSchn} constructed infinite families of rank-four groups with $\D(G)>\Ds(G)$;
see also \cite{Mazur1992}. Liu
\cite[Corollary~4.7]{Liu2020} showed that $\D(G)-\Ds(G)$ can be
arbitrarily large at fixed non-prime-power exponent, and Wang
\cite[Theorem~3.1]{Wang2026} proved that it is unbounded at every fixed
rank $r\ge8$. Dimitrov \cite[Section~2.2]{Dimitrov2007} asked whether
$\D(G)/\Ds(G)$ is bounded by an absolute constant. His estimate
$\D(G)/\Ds(G)<(500r\log r)^r$ for $r\ge2$
\cite[Theorem~1]{Dimitrov2007} gives a bound at each fixed rank; a bound
independent of the rank remains open. For related results, see
\cite{BisMaz,Gao2000,GeroldingerGryn,GirardSchmid2019,Mazur1992}.

A classical bound of van Emde Boas and Kruyswijk
\cite[Theorem~7.1]{EmdeBoasKruyswijk1969} is
\begin{equation}\label{eq:classical}
 \D(G)\le \expn(G)\left(1+\log\frac{|G|}{\expn(G)}\right);
\end{equation}
see also \cite[Theorem~3.6(1)]{GaoGeroldinger2006} and
\cite[Theorem~1]{Meshulam1990}. This estimate was used in the proof of the
infinitude of Carmichael numbers \cite{AGP1994}. For other general and
fixed-rank estimates, see
\cite{AlonDubiner1995,BalasubramanianBhowmik2007,Girard2018}.
Bhowmik and Schlage-Puchta
\cite[Theorem~1.1]{BhowmikSchlagePuchta2012} proved that
$\D(G)\le\expn(G)+|G|/\expn(G)-1$ when
$\expn(G)\ge\sqrt{|G|}$. Wang
\cite[Theorem~6.2 and Remark~6.9]{Wang2026} obtained uniform bounds in
terms of rank and exponent by a probabilistic group-algebra argument.
We obtain the following bound in terms of order and exponent.

\begin{theorem}\label{thm:intro-uniform}
For every finite abelian group $G$,
\begin{equation}\label{eq:main-uniform}
 \D(G)<4.86\,\expn(G)\frac{\log|G|}{\log\expn(G)}.
\end{equation}
\end{theorem}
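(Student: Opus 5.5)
The plan is to combine the two regimes in which good bounds already exist: $p$-groups, where $\D=\Ds$, and the classical estimate \eqref{eq:classical}, which is efficient only when $|G|/\expn(G)$ is small compared with a power of $\expn(G)$. The tool linking them is the standard subgroup inequality, which I take from the literature (\cite[Proposition~5.7.11]{GeroldingerHalterKoch2006}, stated from memory): for every subgroup $H\le G$,
\[
 \D(G)\le (\D(H)-1)\,\expn(G/H)+\D(G/H).
\]
Write $n=\expn(G)$, $G=\bigoplus_{p}G_p$, and let $n=\prod_p p^{a_p}$ and $r_p=\rank(G_p)$. The quantity to beat is $n\log|G|/\log n$. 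Since $\log|G|\ge\sum_p a_p\log p+\sum_p(r_p-1)\log p$, it is at least $n\bigl(1+\sum_p (r_p-1)\log p/\log n\bigr)$.

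\textbf{Step 1 (peeling off primary components).} Order the primes dividing $n$ as $p_1,\dots,p_k$ and apply the inequality repeatedly with $H=G_{p_1}$, then $G_{p_2}$ inside the quotient, and so on. Using $\D(G_p)-1=\Ds(G_p)-1\le r_p\,p^{a_p}$, this gives
\[
 \D(G)-1\le\sum_{j=1}^k r_{p_j}\,\frac{n}{\prod_{i<j}p_i^{a_{p_i}}}.
\]
The point of the ordering is that a prime with large rank should be placed late, so that its term $r_p$ is multiplied by a small cofactor. I would choose the ordering greedily, by decreasing $p^{a_p}/r_p$ or a similar ratio, and then compare each term with its share $n(r_p-1)\log p/\log n$ of the target.

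\textbf{Step 2 (a hybrid to control the bad primes).} Step~1 alone fails when one prime $p$ has huge rank but $p^{a_p}$ is tiny compared with $n$, for example $G=C_2^{r}\oplus C_m$. Here the term $r_p\cdot n/(\text{earlier cofactors})$ can be of size $rn$, whereas the target is only about $rn\log 2/\log n$. To handle this, split the set of primes into a part $S$ whose ranks are small, treated by Step~1, and a complementary part. For the complementary part, apply \eqref{eq:classical} to a suitable subgroup or quotient: it costs only $\expn\cdot\log(|\cdot|/\expn)$, which is proportional to $\log|G_p|$ rather than to $r_p$. The estimate then splits into a piece bounded by $n\cdot O(1)$ and a piece bounded by $n\cdot O\bigl(\sum_{p\notin S}r_p\log p/\log n\bigr)$. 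For the second piece, the key computation is that \eqref{eq:classical}, applied to a group of exponent $m\mid n$, contributes $m\log(\cdot)$, and the factor $m\le n$ can be traded for the $1/\log n$ through the cofactor saved in the subgroup inequality.

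\textbf{Step 3 (optimization, the main obstacle).} The constant $4.86$ will come from optimizing the threshold that defines $S$, together with the ordering, against the worst case over all exponent and rank profiles. This reduces to an elementary but delicate inequality of the form
\[
 \sum_j r_{p_j}\prod_{i<j}p_i^{-a_{p_i}}+(\text{classical part})\le c\Bigl(1+\sum_p\frac{(r_p-1)\log p}{\log n}\Bigr),
\]
and I expect to have to prove it in cases according to whether $\log|G|/\log n$ is bounded or large. I expect this step to be the hardest: showing that a single absolute $c<4.86$ works uniformly, especially when $n$ is small, where $\log n$ in the denominator is weak. For very small $n$ (say $n\le 10$ or so) I would fall back on \eqref{eq:classical} directly and on the trivial bound $\D(G)\le|G|$, checking the constant numerically. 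If the hybrid bookkeeping turns out too lossy, the fallback is a group-algebra argument over $\F_p$ for the dominant prime, in the spirit of \cite{Wang2026}, to get a rank-times-exponent bound directly.
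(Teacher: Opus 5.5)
There is a genuine gap, and it starts with the inequality you quote from memory. The bound $\D(G)\le(\D(H)-1)\expn(G/H)+\D(G/H)$ is false in general. It fails for exactly the ordering you propose, with high-rank primes last. Take $G=C_2^4\oplus C_6\cong C_3\oplus C_2^5$. Your ratio rule peels $H=G_3\cong C_3$ first, and the right-hand side is $2\cdot2+6=10=\Ds(G)$. But $\D(G)\ge11$ by \Cref{LowerDG}, with $n=p=2$, $r=4$, $k=k_1=3$, $m=2$, $\ell=3$, $t=1$, as in the proof of \Cref{prop:positive-density-defect}. The usual extraction argument only gives $\D(G)\le(\D(H)-1)\expn(G/H)+\eta(G/H)$. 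Here $\eta(G/H)$ is the length that forces a zero-sum subsequence of length at most $\expn(G/H)$. In general $\eta$ cannot be replaced by $\D$; the two agree when $G/H$ is cyclic. Note also that $\eta(C_2^r)=2^r$.

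With the correct inequality, Steps 1--2 give no saving in the only regime that matters. By \eqref{eq:classical}, $\D(G)\le\max\{1,\log\expn(G)\}\,\expn(G)\log|G|/\log\expn(G)$, which already settles every group with $\expn(G)\le128$. The theorem is really about large exponent together with a high-rank Sylow subgroup. A typical case is $G=C_2^r\oplus C_m$ with $m$ odd and $r\gg\log m$, where the target is $\asymp rm/\log m$:
\begin{itemize}
\item peeling $C_2^r$ first costs $\asymp rm$;
\item peeling it last costs $\eta(C_2^r)=2^r$;
\item applying \eqref{eq:classical} to any piece again gives $\asymp rm$.
\end{itemize}
Your ``trade of $m$ for $1/\log n$'' is not backed by any inequality, so Step 3 is not an optimization of a working bound; it is the entire missing proof.

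The missing idea is the one you list only as a fallback, and it is what the paper does:
\begin{enumerate}
\item Write $G=P\oplus H$ with $P$ a Sylow $p$-subgroup, and work in $\F_{p^f}[G]$ with the products $\prod_i(\alpha_i-X^{g_i})$, $\alpha\in\mu_e^N$.
\item Detect vanishing through the $|H|$ characters of the Hall complement. A union bound over these characters gives $\D(G)\le N$ whenever $|H|\Pr(\operatorname{Bin}(N,1/\expn(H))\le\D(P)-1)<1$. So the complement costs only $\log|H|$, never its rank.
\item The Chernoff bound then reduces the problem to finding a prime with $y_p\le b$ and $\expn(G_p)J_C(y_p)>\log\expn(G)$. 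Such a prime exists by the mean-one argument combined with $\psi(x)<1.03883x$ applied to the pairwise coprime $\expn(G_p)$ (\Cref{prop:distributional-order-exponent}).
\item The choice $C=4.855$, $b=2.04$ produces the constant.
\end{enumerate}
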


When
$|G|$ is large compared with $\exp(G)$, it improves \eqref{eq:classical} by a factor of order
$\log\exp(G)$.
The scale in \eqref{eq:main-uniform} is asymptotically attained by
homocyclic $p$-groups as their exponents tend to infinity: if $G=C_n^r$
with $n$ a prime power, then
$\expn(G)\log|G|/\log\expn(G)=rn$ and $\D(G)=r(n-1)+1$.
Our next results describe this scale for almost all finite abelian groups.

We count groups up to isomorphism and order them by their orders. For
$X\ge2$, put
\[
 \mathcal G(X)=\{[G]:1<|G|\le X\},\qquad A(X)=|\mathcal G(X)|,
\]
where $[G]$ denotes the isomorphism class of $G$.
A property holds for almost all finite abelian groups if the proportion
of isomorphism classes in $\mathcal G(X)$ satisfying it tends to $1$ as $X\to\infty$.
The notation $f(G)\sim g(G)$ for almost all $G$ means that, for every
$\varepsilon>0$, the inequality $|f(G)/g(G)-1|<\varepsilon$ holds for
almost all $G$. As usual, $f\asymp g$ means that $f/g$ is bounded above
and below by positive constants, and $f\sim g$ means that $f/g\to1$.

\begin{theorem}\label{thm:intro-global-scale}
As $X\to\infty$,
\begin{equation}\label{eq:main-typical}
 \frac1{A(X)}\sum_{\substack{[G]\\1<|G|\le X}}
 \left|\frac{\expn(G)\log|G|}{\D(G)\log\expn(G)}-1\right|
 \asymp\frac1{\log X}.
\end{equation}
In particular, for almost all finite abelian groups,
\[
 \D(G)\sim\expn(G)\frac{\log|G|}{\log\expn(G)}.
\]
\end{theorem}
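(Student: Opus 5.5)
The plan is to write every $G$ as $G\cong C_e\oplus H$ with $e=\expn(G)$ and $\expn(H)\mid e$, set $m=|H|=|G|/e$, and show that the summand in \eqref{eq:main-typical} is essentially $\log m/\log e$. Note that
\[
 \frac{\expn(G)\log|G|}{\log\expn(G)}=e+e\frac{\log m}{\log e}.
\]

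\textbf{Pointwise upper bound.} I first bound each summand pointwise.
\begin{itemize}
\item From \eqref{eq:Dstar}, $\D(G)\ge e+\sum_{i<r}(n_i-1)\ge e+\log_2 m\ge e$. Hence the ratio $R(G)=e\log|G|/(\D(G)\log e)$ satisfies $R(G)\le 1+\log m/\log e$.
\item From \cref{thm:intro-uniform}, $R(G)>1/4.86$, so $|R(G)-1|\le 1$ always.
\item For the lower side I would use Bhowmik--Schlage-Puchta when $e\ge m$: $\D(G)\le e+m-1$. This gives $1-R(G)\le 1-e/\D(G)\le m/e$.
\end{itemize}
Altogether,
\[
 |R(G)-1|\le \min\Bigl(1,\ \frac{\log m}{\log e}+\frac{m}{e}\Bigr).
\]

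\textbf{Averaging.} Next I average this bound. Group classes by $H$: the number of classes $[C_e\oplus H]$ with $e\le X/m$ and $\expn(H)\mid e$ is at most $X/(m\,\expn(H))$. The number of classes of a given order is multiplicative and equals a product of partition numbers. Hence $\sum_{H}\log|H|/(|H|\expn(H))$ converges, and so does $\sum_H 1/\expn(H)^{1-\delta}$ in the relevant weighted form.

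I split the range into $e\ge\sqrt X$ and $e<\sqrt X$.
\begin{itemize}
\item For $e\ge\sqrt X$, we have $\log e\ge\tfrac12\log X$. The first term then contributes $O\bigl(\tfrac{1}{\log X}\sum_H \tfrac{X\log m}{m\,\expn(H)}\bigr)=O(X/\log X)$. The term $m/e$ contributes $\sum_H \sum_{e}m/e\cdot[\expn(H)\mid e]$, restricted to $e\le X/m$. This is $O(\sum_H \log X/\expn(H))$, which I expect to be $O(X^{1/2+o(1)})$ after truncation.
\item The classes with $e<\sqrt X$ (equivalently $m>\sqrt X$) number $O(X^{1/2+o(1)})$, and each contributes at most $1$.
\end{itemize}
Since $A(X)\asymp X$, this gives the upper bound $O(1/\log X)$.

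\textbf{Lower bound.} For the matching lower bound I restrict to the positive-proportion family $G=C_p\oplus C_{pk}$ with a fixed prime $p$ (say $p=2$), $pk\in(\sqrt X,X/p]$ and $e=pk$. Here $\D(G)=pk+p-1$ because rank two gives $\D=\Ds$. So
\[
 R(G)-1=\frac{pk\,(\log p/\log(pk))-(p-1)}{pk+p-1}\ge \frac{c_p}{\log X}
\]
for large $X$. The number of such classes is $\gg X$, which yields the $\gg 1/\log X$ side.

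The "in particular" statement then follows by Markov's inequality.

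\textbf{Main obstacle.} I expect the hard part to be the counting step: bounding the weighted sums over $H$ uniformly, in particular controlling classes whose cofactor $m$ is large but still below $e$. If the $m/e$ term is troublesome, I would first try to replace it by the bound from \cref{thm:intro-uniform} combined with $\log m/\log e$, and treat $m\ge (\log X)^2$ as an exceptional set of density $O(1/\log X)$.
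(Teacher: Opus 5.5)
Your argument works, but by a different route from the paper, and a few steps need repair.

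\textbf{Comparison with the paper.} The paper never bounds $|U(G)/\D(G)-1|$ pointwise. With $A_G=\D(G)/\Ds(G)$ and $B_G=U(G)/\Ds(G)$, both at least $1$, it uses $|B_G/A_G-1|\le(A_G-1)+(B_G-1)$ and quotes two separately proved averages:
\begin{itemize}
\item \Cref{thm:intro-mean-ratio}: the mean of $A_G-1$ is $O(X^{-1/4})$. This needs $\Def(G)\le Q(G)-1$ from Bhowmik--Schlage-Puchta and $\D(G)/\Ds(G)<4.86\rank(G)$ from \Cref{thm:intro-uniform}.
\item \Cref{prop:U-Dstar-mean}: the mean of $B_G-1$ is $O(1/\log X)$. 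This comes from $0\le U/\Ds-1\le\log q/\log(et)$ on each fiber, split at $qe=X^{1/2}$.
\end{itemize}
The paper's lower bound uses the same family $C_2\oplus C_{2t}$ as yours.

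You instead exploit the asymmetry of the summand. The upper deviation $U/\D-1$ is controlled by $\D(G)\ge\expn(G)$ alone. The lower deviation $1-U/\D$ is controlled by Bhowmik--Schlage-Puchta when $e\ge m$, and trivially by $1$ otherwise. So you never need an upper bound for $\D(G)$ when $e<m$, and \Cref{thm:intro-uniform} is in fact unnecessary. Your argument is self-contained, while the paper's modular route also yields the power-saving mean of $\D/\Ds$.

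\textbf{Repairs.}

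First, ``$|R(G)-1|\le1$ always'' does not follow from \Cref{thm:intro-uniform}. That theorem only gives $1-R<1-1/4.86$, and $1-R<1$ is trivial anyway since $R>0$. Your only bound on $R-1$ is $\log m/\log e$, which is unbounded. Lower bounds via $\Ds$ cannot do better: $U/\Ds\to6\log2/\log6>2$ for $C_2^r\oplus C_6$ as $r\to\infty$. Replace the $\min$ by $|R-1|\le\log m/\log e+\min\{1,m/e\}$. This gives $|R-1|\le1+\log_2X$ when $e<\sqrt X$, and the extra $\log X$ factor is harmless there.

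Second, $e<\sqrt X$ is not equivalent to $m>\sqrt X$, since small groups have both $e$ and $m$ small. The count you need is still fine. With $\beta(q)=\sum_{|H|=q}1/\expn(H)$ as in \eqref{eq:beta-def}, there are at most $\sqrt X\sum_{q\le X}\beta(q)\le B(\eta)X^{1/2+\eta}$ such classes.

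Third, in the $m/e$ term the sum over $t$ gives $\ll(m/\expn(H))\log X$, not $\log X/\expn(H)$. What saves you is that $e\ge\sqrt X$ and $em\le X$ force $m\le\sqrt X\le e$, so Bhowmik--Schlage-Puchta applies. The contribution is then at most $\log X\sum_{q\le\sqrt X}q\beta(q)\le B(\eta)X^{(1+\eta)/2}\log X$.

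Finally, the convergence of $\sum_H\log|H|/(|H|\expn(H))$ is \Cref{lem:B-convergence} with $\sigma=1/2$. Its proof uses the weight $1/\expn(H)$ in each Euler factor, not just the multiplicativity of the class count.
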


Therefore the order--exponent expression in \Cref{thm:intro-uniform} gives
the asymptotic value of $\D(G)$ in this counting model. The one-sided
bound $\D(G)\le\expn(G)\log|G|/\log\expn(G)$ also holds for almost
all groups; see \Cref{prop:almost-all-U-envelope}. The next theorem
compares $\D(G)$ directly with the standard lower bound $\Ds(G)$.

\begin{theorem}\label{thm:intro-mean-ratio}
For every $\delta>0$, as $X\to\infty$,
\begin{equation}\label{eq:main-ratio}
 \frac1{A(X)}\sum_{\substack{[G]\\1<|G|\le X}}
 \frac{\D(G)}{\Ds(G)}
 =1+O_\delta\!\left(X^{-1/2+\delta}\right).
\end{equation}
In particular, for almost all finite abelian groups,
\[
 \D(G)\sim\Ds(G).
\]
\end{theorem}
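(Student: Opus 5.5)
The plan is to bound $\D(G)/\Ds(G)-1$ by a quantity that is zero or tiny for most groups. We then control the rare groups by a crude uniform bound, namely \Cref{thm:intro-uniform} or \eqref{eq:classical}. Since $\D(G)\ge\Ds(G)$, every term of the average is at least $1$, so only an upper bound is needed.

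\textbf{Step 1 (sets of small density).} Let $N(X;\mathcal P)$ be the number of classes in $\mathcal G(X)$ with a property $\mathcal P$. Recall $A(X)\sim cX$ with $c=\prod_{k\ge2}\zeta(k)$. A group of order $m$ decomposes into its Sylow parts, so the number of isomorphism classes of order $m$ is $a(m)=\prod_{p^e\| m}P(e)$, where $P$ is the partition function.

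\textbf{Step 2 (the good set).} Call $G$ \emph{good} if $G\cong C_{n_1}\oplus C_{n_2}\oplus H$ with $H$ a $p$-group, or more generally if $G$ lies in a class where $\D=\Ds$ is known. The cleanest choice is: $G$ is cyclic times a group whose non-cyclic part is a $p$-group. I expect the typical group to be of the form $C_{n}\oplus K$, where $K$ is built from squares of small primes. The concrete criterion I will use is that $\D(G)=\Ds(G)$ holds whenever $G$ has rank at most $2$ or is a $p$-group. Also, $\D(C_{m}\oplus H)=\Ds$ holds when $H$ is a $p$-group and $p\nmid m$ (an $m$-fold extension argument in the group algebra $\F_p[H]$, à la van Emde Boas–Kruyswijk).

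\textbf{Step 3 (counting the bad set).} A bad group needs at least two distinct primes $p,q$ with $p^2q^2\mid |G|$ and with non-cyclic Sylow parts, or rank at least $3$ spread over two primes. I will count bad groups by weighting: the contribution from a non-cyclic $p$-part is roughly $\sum_{p}p^{-2}\cdot(\cdot)$, which is not small. So the gain must come from a size restriction instead. If the non-cyclic part $K$ of $G=C_n\oplus K$ has $|K|\le X^{1/2-\delta}$, then presumably the theorem needs $\D(G)-\Ds(G)\ll|K|$, while $\Ds(G)\asymp n$. This gives a ratio error of about $|K|/n\le |K|^2/X$. Summing over $K$ with weight $X/|K|$ gives an error term of $\sum_{|K|\le X}\min(1,|K|/(X/|K|))/|K|\ll X^{-1/2+\delta}$. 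The needed input is a bound $\D(C_n\oplus K)\le \D^*(C_n\oplus K)+O(|K|)$, which I take from the standard inductive bound $\D(G)\le \D(K)\exp(\cdot)$ type estimate: $\D(G)\le n+|K|$ style (Olson-type $\D(G)\le \exp(G)+|G|/\exp(G)-1$ of Bhowmik–Schlage-Puchta when $\exp(G)\ge\sqrt{|G|}$). Precisely, when $n\ge |K|$, Bhowmik–Schlage-Puchta gives $\D(G)\le n+|K|-1$, so $\D(G)/\Ds(G)\le 1+|K|/n\le1+|K|^2/|G|$.

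\textbf{Step 4 (tail).} For the classes with $|K|>\sqrt{|G|}$, I use \Cref{thm:intro-uniform} to bound the ratio by $O(\log |G|)$. The number of such classes up to $X$ is $\ll X^{1/2+\delta}$, since $K$ is a group each of whose invariant factors has multiplicity $\ge2$, and such "square-full" group orders have counting function $\ll Y^{1/2+\delta}$. Summing this tail and the Step 3 error, and dividing by $A(X)\asymp X$, yields \eqref{eq:main-ratio}. The main obstacle is the counting in Step 3: verifying that $\sum_{[G]}|K|/n$ over $|G|\le X$ is $O(X^{1/2+\delta})$. For this I will use the Dirichlet series $\zeta(s)\prod_{k\ge2}\zeta(ks)$ with a twist $|K|^{2}$, whose abscissa analysis gives the exponent $1/2$.
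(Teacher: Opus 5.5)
\textbf{Verdict: genuine gap in the counting step.} Your skeleton can be made to work. For $Q(G)=|G|/\expn(G)\le\expn(G)$, Bhowmik--Schlage-Puchta gives $\D(G)/\Ds(G)\le1+Q(G)/\expn(G)$, and on the complement \Cref{thm:intro-uniform} gives $\D(G)/\Ds(G)\ll\log X$. However, the step you yourself call the main obstacle, the counting, is not done, and what you write in its place is wrong.

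\emph{The fiber weight.} Write $G=K\oplus C_{t\expn(K)}$ with $|K|=Q(G)$. The number of $G$ with a given $K$ and $|G|\le X$ is $\lfloor X/(|K|\expn(K))\rfloor$, not $X/|K|$. The factor $1/\expn(K)$ is indispensable: there are $\sim\mathfrak AY$ classes of order at most $Y$, so $\sum_{[K]}|K|^{-\sigma}$ diverges for $\sigma\le1$. Indeed, your displayed sum $\sum_{|K|\le X}\min\{1,|K|^2/X\}/|K|$ is of order $\log X$ (already $\sum_{\sqrt X<k\le X}1/k\approx\frac12\log X$). So your bookkeeping does not even give $o(1)$.

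\emph{The reversed inequality.} The bound $|K|/n\le|K|^2/X$ goes the wrong way: $n\le X/|K|$ gives $|K|/n\ge|K|^2/X$. For fixed $K$ the error sum is therefore dominated by the smallest admissible $n$ and picks up a factor $\log X$.

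\emph{The Dirichlet series.} A plain abscissa argument for a series twisted by $|K|^2$ cannot work. The restriction $\expn(G)\ge Q(G)$ is essential and invisible to it: without it, the single group $C_2^r$ with $2^r\le X<2^{r+1}$ already contributes $Q/\expn=2^{r-2}>X/8$.

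\emph{The Step 4 count.} Your justification is false. $C_2\oplus C_4\oplus C_8\oplus C_8$ has $Q>\expn$, while $K=C_2\oplus C_4\oplus C_8$ has all multiplicities $1$. Likewise $C_2^{10}\oplus C_6$ has $Q>\expn$ with non-square-full order $2^{11}\cdot3$.

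\textbf{What is missing, and how it closes.} The missing ingredient is the fiber count of \Cref{lem:coexponent-parameterization} combined with \Cref{lem:B-convergence}. With $\beta(q)=\sum_{|K|=q}\expn(K)^{-1}$, one has $\sum_q\beta(q)q^{-\sigma}<\infty$ for every $\sigma>0$, via the Euler product over partitions. With this, your split closes:
\begin{itemize}
\item the range $Q\le\expn$ contributes at most $(1+\log X)\sum_{q\le\sqrt X}q\beta(q)\le(1+\log X)B(\eta)X^{(1+\eta)/2}$ to $\sum(\D/\Ds-1)$;
\item the range $Q>\expn$ contributes $\ll\log X\sum_q\beta(q)\min\{q,X/q\}\ll B(\eta)X^{(1+\eta)/2}\log X$.
\end{itemize}

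\textbf{Comparison with the paper.} The paper avoids the case split altogether. Using also $\D(G)\le2\sqrt{|G|}-1$ when $\expn(G)<\sqrt{|G|}$, it gets $\Def(G)\le Q(G)-1$ for every $G$. It then takes the geometric mean of $R(G)\le(q-1)/(te)$ and $R(G)\le4.86(\rank(K)+1)$ and sums $t^{-1/2}$, so no logarithm appears.

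\textbf{Step 2.} Delete it. The claim $\D(C_m\oplus H)=\Ds(C_m\oplus H)$ for a $p$-group $H$ with $p\nmid m$ is false. The groups $C_2^{4s}\oplus C_{2k}\cong C_k\oplus C_2^{4s+1}$ ($k$ odd) of \Cref{prop:positive-density-defect} have $\Def\ge s$. Fortunately, your final argument does not use this claim.
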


For the additive difference, we show that for every function
$L:[1,\infty)\to\mathbb R$ with $L(x)\to\infty$, almost all groups
satisfy $\D(G)-\Ds(G)\le L(\Ds(G))$; see
\Cref{prop:intrinsic-additive-upper-tail}. For every fixed $C\ge0$,
however, the groups with $\D(G)-\Ds(G)>C$ have positive lower density
by \Cref{prop:positive-density-defect}.

Starting from Olson's group-ring method \cite{Olson1969,Smertnig2010},
we reduce the proof of \Cref{thm:intro-uniform} to an entropy estimate;
see also Wang \cite[Section~6]{Wang2026} for a related approach.
A weighted comparison of the logarithmic orders and exponents of the
Sylow subgroups, together with an estimate for the Chebyshev function,
yields the bound. For the mean and density results, we separate the
largest cyclic factor from the rest of the group. We then count groups
by the order of the remaining summand and combine
\Cref{thm:intro-uniform} with classical bounds for $\D(G)$.

In \Cref{sec:algebraic-preparation} we give the group-ring criterion and
the required binomial estimate. We prove \Cref{thm:intro-uniform} in
\Cref{sec:order-exponent}. In \Cref{sec:order-density} we first prove
\Cref{thm:intro-mean-ratio}, then use it to prove
\Cref{thm:intro-global-scale}, and finally study the additive difference.

\section{Group rings and the Davenport constant}\label{sec:algebraic-preparation}
	
	We recall the classical group-ring method introduced by Olson and derive some consequences for the Davenport constants.
%The underlying annihilation mechanism is closely related to Wang's argument~\cite[Section~6]{Wang2026}.
	
	Let $G$ be a finite abelian group. Let $S=g_1\bdot\dots\bdot g_\ell$ be a sequence over $G$. We call $S$ zero-sum free if $\sum_{i\in I}g_i\ne0$
	for every nonempty set of positions $I\subseteq\{1,\ldots,\ell\}$.
	Let $K[G]$ be the group algebra over a field $K$, with basis $\{X^g:g\in G\}$ and multiplication $X^gX^h=X^{g+h}$. For background see \cite[Section~5.5]{GeroldingerHalterKoch2006} and
\cite{Smertnig2010}.
	
	\begin{lemma}\label{lem:scalar-nonvanishing}
		Let $S=g_1\bdot\dots\bdot g_N$ be a zero-sum free sequence over $G$. If $\alpha_1,\ldots,\alpha_N\in K^\times$, then
		\[
		\prod_{i=1}^{N}(\alpha_i-X^{g_i})\ne0
		\qquad\text{in }K[G].
		\]
	\end{lemma}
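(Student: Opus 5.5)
Expand the product by distributing over the $N$ factors. For each subset $I\subseteq\{1,\ldots,N\}$ we obtain the term
\[
\Bigl(\prod_{i\notin I}\alpha_i\Bigr)(-1)^{|I|}X^{\sigma(I)},\qquad \sigma(I)=\sum_{i\in I}g_i,
\]
so that
\[
\prod_{i=1}^{N}(\alpha_i-X^{g_i})=\sum_{h\in G}c_hX^h,\qquad c_h=\sum_{I:\,\sigma(I)=h}(-1)^{|I|}\prod_{i\notin I}\alpha_i .
\]
It suffices to find one $h\in G$ with $c_h\ne0$, and we will take $h=0$.

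Since $S$ is zero-sum free, the only subset $I$ with $\sigma(I)=0$ is $I=\emptyset$. Every nonempty $I$ satisfies $\sigma(I)\ne0$ by definition, and this holds even when the $g_i$ repeat, because zero-sum freeness is stated in terms of sets of positions. Hence
\[
c_0=\prod_{i=1}^N\alpha_i .
\]
This is nonzero, because each $\alpha_i\in K^\times$ and $K$ is a field. Since $\{X^g\}$ is a basis of $K[G]$, a nonzero coefficient means the element is nonzero, which gives the claim.

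Nothing here is a real obstacle. The only point that needs care is to group the expansion by the exponent $\sigma(I)$ in the basis $\{X^g\}$, and not to treat equal group elements occurring at different positions as distinct basis vectors. Once that is done, the coefficient of $X^0$ is visibly a single product of units.
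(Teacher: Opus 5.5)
Your proof is correct and is the same argument as the paper's. The coefficient of $X^0$ equals $\alpha_1\cdots\alpha_N\ne0$, because zero-sum freeness leaves only the empty set of positions with sum $0$.
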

\begin{proof}
The coefficient of $X^0$ in the product equals
$\alpha_1\cdots\alpha_N$, since no nonempty subsequence of $S$
has sum zero. This coefficient is nonzero.
\end{proof}
	
	Let $p$ be prime and
	\[
	P\cong C_{p^{a_1}}\oplus\cdots\oplus C_{p^{a_s}}.
	\]
	Let $K$ be a field of characteristic $p$.
	The augmentation map is
	\[
	\varepsilon:K[P]\longrightarrow K,
	\qquad
	\varepsilon\left(\sum_{g\in P}c_gX^g\right)=\sum_{g\in P}c_g,
	\]
	and its kernel
	\[
	J(P)=\ker\varepsilon
	\]
	is the augmentation ideal. Clearly, there is a $K$-algebra isomorphism
	\[
	K[P]\cong K[T_1,\ldots,T_s]/(T_1^{p^{a_1}},\ldots,T_s^{p^{a_s}}),
	\]
	under which $J(P)$ corresponds to $(T_1,\ldots,T_s)$.
	
	For a $p$-group $P\cong C_{p^{a_1}}\oplus\cdots\oplus C_{p^{a_s}}$,
Olson~\cite{Olson1969} proved
	\begin{equation}\label{eq:olson-formula}
		\D(P)=\Ds(P)=1+\sum_{i=1}^{s}(p^{a_i}-1).
	\end{equation}
	The following nilpotence statement is the group-algebra ingredient
of Olson's argument~\cite{Olson1969}; see also
\cite[Section~5.5]{GeroldingerHalterKoch2006}.
	
	\begin{lemma}\label{lem:nilpotence-index}
		Let $p$ be prime and
		\[
		P\cong C_{p^{a_1}}\oplus\cdots\oplus C_{p^{a_s}}.
		\]
		Then we have
		\[
		J(P)^{\D(P)}=0,
		\qquad
		J(P)^{\D(P)-1}\ne0.
		\]
	\end{lemma}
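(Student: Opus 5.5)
Under the isomorphism $K[P]\cong K[T_1,\ldots,T_s]/(T_1^{p^{a_1}},\ldots,T_s^{p^{a_s}})$ recalled above, $J(P)$ corresponds to $\mathfrak m=(T_1,\ldots,T_s)$. I will identify $T_i$ with $X^{e_i}-1$, where $e_1,\ldots,e_s$ is a basis of $P$ with $\operatorname{ord}(e_i)=p^{a_i}$. This works because $(X^{e_i}-1)^{p^{a_i}}=X^{p^{a_i}e_i}-1=0$ in characteristic $p$, and a dimension count gives the isomorphism. Put $M=\sum_{i=1}^s(p^{a_i}-1)$, so $M+1=\D(P)$ by \eqref{eq:olson-formula}. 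Both claims then reduce to monomial combinatorics in the truncated polynomial ring $R=K[T_1,\ldots,T_s]/(T_i^{p^{a_i}})$.

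\textbf{Vanishing of $J(P)^{M+1}$.} The ideal $\mathfrak m^{M+1}$ is spanned by the images of monomials $T_1^{b_1}\cdots T_s^{b_s}$ with $\sum b_i\ge M+1$. Here I use that $\mathfrak m^k$ is generated as an ideal by products of $k$ generators, so it is spanned by monomials of total degree at least $k$. Any such monomial has some $b_i\ge p^{a_i}$, since otherwise $\sum b_i\le \sum(p^{a_i}-1)=M$. Hence every such monomial is $0$ in $R$, and $J(P)^{\D(P)}=0$.

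\textbf{Non-vanishing of $J(P)^{M}$.} The monomial $T_1^{p^{a_1}-1}\cdots T_s^{p^{a_s}-1}$ lies in $\mathfrak m^{M}$. It is nonzero in $R$, because the monomials $T^b$ with $0\le b_i<p^{a_i}$ form a $K$-basis of $R$. Hence $J(P)^{\D(P)-1}\ne0$.

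The only step that needs care is justifying the presentation of $K[P]$ with $J(P)\leftrightarrow\mathfrak m$. The surjection $K[T]\to K[P]$, $T_i\mapsto X^{e_i}-1$, is onto because each $X^{e_i}=1+T_i$ lies in the image. It kills $T_i^{p^{a_i}}$ by the Frobenius identity above. Both sides then have dimension $|P|=\prod p^{a_i}$, so the induced map is an isomorphism. Finally, the image of $\mathfrak m$ is the ideal generated by the $X^{e_i}-1$. This ideal is contained in $J(P)$, and it has codimension $1$ because $R/\mathfrak m\cong K$; since $J(P)$ also has codimension $1$, the two coincide. The paper states this correspondence as clear, and I would record it as above.
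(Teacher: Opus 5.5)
Your proof is correct and follows the paper's argument. In the truncated polynomial ring a monomial is nonzero exactly when every exponent is below $p^{a_i}$, so the top nonzero degree is $\sum_i(p^{a_i}-1)=\D(P)-1$ by Olson's formula. Your additional check of the presentation $K[P]\cong K[T_1,\ldots,T_s]/(T_1^{p^{a_1}},\ldots,T_s^{p^{a_s}})$ with $J(P)\leftrightarrow(T_1,\ldots,T_s)$, which the paper only asserts as clear, is also sound.
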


\begin{proof}
In the polynomial quotient above, a monomial
$T_1^{u_1}\cdots T_s^{u_s}$ is nonzero if and only if
$0\le u_i<p^{a_i}$ for all $i$. Hence the largest total degree
of a nonzero monomial is $\sum_i(p^{a_i}-1)=\D(P)-1$,
by \eqref{eq:olson-formula}. This proves both assertions.
\end{proof}

	Let $H$ be a finite abelian group of exponent $e$, with $p\nmid e$. There is an integer $f\ge1$ such that $e\mid p^f-1$. Take $K=\F_{p^f}$. Then $K^\times$ contains the cyclic group
	\[
	\mu_e=\{z\in K^\times:z^e=1\}
	\]
	of order $e$. Let
	\[
	\widehat H=\Hom(H,\mu_e).
	\]
The character-orthogonality relation (see \cite[Section~6.7]{Apostol1976}
and \cite[Proposition~5.5.2]{GeroldingerHalterKoch2006}) gives
\[
 \sum_{\chi\in\widehat H}\chi(h)=
 \begin{cases}
 |H|,&h=0,\\
 0,&h\ne0,
 \end{cases}
\]
for every $h\in H$. Moreover, $|H|$ is invertible in $K$.

Let $G=P\oplus H$. For $\chi\in\widehat H$, define the $K$-algebra map
	\begin{equation}\label{eq:Phi-character}
		\Phi_\chi:K[G]\longrightarrow K[P],
		\qquad
		\Phi_\chi(X^{(a,b)})=\chi(b)X^a.
	\end{equation}
	
	\begin{lemma}\label{lem:hall-detection}
		If $F\in K[G]$ and $\Phi_\chi(F)=0$ for every $\chi\in\widehat H$, then $F=0$.
	\end{lemma}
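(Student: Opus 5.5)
We recover the coefficients of $F$ from the family $\{\Phi_\chi(F)\}_{\chi\in\widehat H}$ by character orthogonality, i.e.\ we invert a Fourier transform on the $H$-component. Write
\[
F=\sum_{(a,b)\in P\oplus H}c_{a,b}X^{(a,b)},\qquad c_{a,b}\in K.
\]
By the definition \eqref{eq:Phi-character},
\[
\Phi_\chi(F)=\sum_{a\in P}\Bigl(\sum_{b\in H}c_{a,b}\chi(b)\Bigr)X^a .
\]
Since $\{X^a:a\in P\}$ is a $K$-basis of $K[P]$, the hypothesis $\Phi_\chi(F)=0$ gives, for every $a\in P$ and every $\chi\in\widehat H$,
\[
\sum_{b\in H}c_{a,b}\chi(b)=0 .
\]

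Now fix $a\in P$ and $b_0\in H$. Multiply the identity for $\chi$ by $\chi(-b_0)=\chi(b_0)^{-1}$ and sum over all $\chi\in\widehat H$. Because each $\chi$ is a homomorphism, $\chi(b)\chi(-b_0)=\chi(b-b_0)$, so we obtain
\[
0=\sum_{b\in H}c_{a,b}\sum_{\chi\in\widehat H}\chi(b-b_0).
\]
The character-orthogonality relation stated above says the inner sum equals $|H|$ when $b=b_0$ and $0$ otherwise. The equation therefore reduces to $|H|\,c_{a,b_0}=0$. Since $|H|$ is invertible in $K$, it follows that $c_{a,b_0}=0$. As $a$ and $b_0$ were arbitrary, $F=0$.

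The only point requiring care is the orthogonality relation in $K$ of characteristic $p$ with values in $\mu_e$. The paper quotes it, and it needs $\widehat H$ to be large enough to separate points. This holds because $\mu_e$ is cyclic of order $e=\expn(H)$, so $\widehat H\cong H$. Once that is granted, the argument is just the routine finite Fourier inversion above.
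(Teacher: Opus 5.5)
Your proof is correct and follows the paper's argument essentially step for step. Like the paper, you read off the coefficient $\sum_{b}c_{a,b}\chi(b)$ of $X^a$ in $\Phi_\chi(F)$, multiply by $\chi(-b_0)$, sum over $\widehat H$ using character orthogonality, and cancel the invertible factor $|H|$. Your added remark that $\widehat H\cong H$ because $\mu_e$ is cyclic of order $\expn(H)$ correctly justifies the orthogonality relation the paper quotes.
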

	
	\begin{proof}
		Assume that
		\[
		F=\sum_{a\in P}\sum_{b\in H}c_{a,b}X^{(a,b)}.
		\]
		For fixed $a$, the coefficient of $X^a$ in $\Phi_\chi(F)$ is
		\[
		\sum_{b\in H}c_{a,b}\chi(b),
		\]
		which vanishes for every $\chi$. Fix $b_0\in H$, multiply by $\chi(-b_0)$, and sum over all characters. By character orthogonality,
		\begin{align*}
			0
			&=\sum_{\chi\in\widehat H}\chi(-b_0)
			\sum_{b\in H}c_{a,b}\chi(b)\\
			&=\sum_{b\in H}c_{a,b}
			\sum_{\chi\in\widehat H}\chi(b-b_0)
			=|H|c_{a,b_0}.
		\end{align*}
		Since $|H|$ is invertible in $K$, one has $c_{a,b_0}=0$. This holds for every $a$ and $b_0$, so $F=0$.
	\end{proof}
	
	The corresponding characteristic-zero statement appears
in \cite[Proposition~4.0.2]{Dimitrov2007}; for the modular decomposition,
see \cite[Lemma~6.7]{Wang2026}. Here the Hall subgroup is diagonalized,
while the Sylow subgroup remains in characteristic $p$, where its
augmentation ideal is nilpotent.
	
	We use the convention $\binom Nj=0$ when $j>N$.
	
	\begin{proposition}\label{prop:finite-sf}
		Let $G=P\oplus H$ be a finite abelian group, where $|H|>1$, $P$ is an abelian $p$-group with $p\nmid|H|$.
		If a positive integer $N$ satisfies
		\begin{equation}\label{eq:finite-criterion}
			|H|\sum_{j=0}^{\D(P)-1}\binom Nj(\expn(H)-1)^{N-j}<\expn(H)^N,
		\end{equation}
		then $\D(G)\le N$.
	\end{proposition}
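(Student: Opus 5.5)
We argue by contradiction with the group-ring method. Suppose $\D(G)>N$, so there is a zero-sum free sequence $S=g_1\bdot\dots\bdot g_N$ over $G$; write $g_i=(a_i,b_i)$ with $a_i\in P$, $b_i\in H$. Work over $K=\F_{p^f}$ with $\expn(H)\mid p^f-1$, as set up before \cref{lem:hall-detection}. For any choice of scalars $\alpha_1,\ldots,\alpha_N\in\mu_e$ (where $e=\expn(H)$), \cref{lem:scalar-nonvanishing} gives $F_\alpha:=\prod_i(\alpha_i-X^{g_i})\ne0$ in $K[G]$. By \cref{lem:hall-detection}, there is some $\chi\in\widehat H$ with $\Phi_\chi(F_\alpha)\ne0$. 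Now
\[
\Phi_\chi(F_\alpha)=\prod_{i=1}^N\bigl(\alpha_i-\chi(b_i)X^{a_i}\bigr)\in K[P].
\]
The plan is to show that, under \eqref{eq:finite-criterion}, one can choose $\alpha$ so that this product vanishes for \emph{every} $\chi$, giving a contradiction.

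The key observation: for a fixed $\chi$, write each factor as $\alpha_i-\chi(b_i)X^{a_i}=(\alpha_i-\chi(b_i))+\chi(b_i)(1-X^{a_i})$. The second summand lies in $J(P)$. If $\alpha_i=\chi(b_i)$, the factor lies in $J(P)$. Hence if at least $\D(P)$ indices $i$ satisfy $\alpha_i=\chi(b_i)$, then $\Phi_\chi(F_\alpha)\in J(P)^{\D(P)}=0$ by \cref{lem:nilpotence-index}. So it suffices to find $\alpha\in\mu_e^N$ such that for every $\chi\in\widehat H$, the number of agreements $\#\{i:\alpha_i=\chi(b_i)\}$ is at least $\D(P)$.

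This is a counting (union bound) step. For fixed $\chi$, the vector $(\chi(b_1),\ldots,\chi(b_N))\in\mu_e^N$ is fixed, and the number of $\alpha\in\mu_e^N$ agreeing with it in exactly $j$ coordinates is $\binom Nj(e-1)^{N-j}$. So the number of "bad" $\alpha$ for this $\chi$ (fewer than $\D(P)$ agreements) is $\sum_{j=0}^{\D(P)-1}\binom Nj(e-1)^{N-j}$. Summing over the $|\widehat H|=|H|$ characters, the total number of $\alpha$ bad for some $\chi$ is at most the left side of \eqref{eq:finite-criterion}, which is strictly less than $e^N=|\mu_e^N|$. Hence a good $\alpha$ exists, all $\Phi_\chi(F_\alpha)$ vanish, so $F_\alpha=0$ by \cref{lem:hall-detection}, contradicting \cref{lem:scalar-nonvanishing}. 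Thus every sequence of length $N$ has a nonempty zero-sum subsequence, i.e.\ $\D(G)\le N$.

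The only points needing care are routine: that $|\widehat H|=|H|$ (since $\mu_e$ is cyclic of order $e=\expn(H)$, $\Hom(H,\mu_e)\cong H$), and that $\alpha_i\in\mu_e\subseteq K^\times$ so \cref{lem:scalar-nonvanishing} applies; the convention $\binom Nj=0$ for $j>N$ handles $N<\D(P)$ trivially (then the criterion cannot hold anyway). The main conceptual step is the factor decomposition placing agreeing factors in $J(P)$; everything else is the union bound.
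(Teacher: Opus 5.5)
Your proposal is correct and is essentially the paper's proof. It uses the same products $F_\alpha$ with $\alpha\in\mu_e^N$, the same observation that a match $\alpha_i=\chi(b_i)$ puts the $i$-th factor of $\Phi_\chi(F_\alpha)$ into $J(P)$ (so $\D(P)$ matches kill it by \cref{lem:nilpotence-index}), and the same union bound over the $|H|$ characters, closed off by \cref{lem:hall-detection} and \cref{lem:scalar-nonvanishing}; your side remarks ($\D(G)>N$ yields a zero-sum free sequence of length exactly $N$, $|\widehat H|=|H|$, and the criterion fails automatically when $N<\D(P)$) are correct and just make explicit what the paper leaves implicit.
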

	
	\begin{proof}
		For simplicity, we denote
		\[
		d=\D(P)-1,
		\qquad e=\expn(H).
		\]
		Suppose, toward a contradiction, that $N$ satisfies \eqref{eq:finite-criterion}, and let
		\[
		S=g_1\bdot\ldots\bdot g_N,
		\qquad g_i=(x_i,y_i)\in P\oplus H,
		\]
		be zero-sum free. As $p\nmid e$, there is an integer $f\ge1$ such that $e\mid p^f-1$. Take $K=\F_{p^f}$ and $ \mu_e=\{z\in K^\times:z^e=1\}$.
		
		For $\alpha=(\alpha_1,\ldots,\alpha_N)\in\mu_e^N$, let
		\[
		F_\alpha=\prod_{i=1}^{N}(\alpha_i-X^{g_i})\in K[G].
		\]
		Since every $\alpha_i$ is nonzero, \Cref{lem:scalar-nonvanishing} shows
		\begin{equation}\label{eq:Falpha-nonzero}
			F_\alpha\ne0
			\qquad(\alpha\in\mu_e^N).
		\end{equation}
		Fix $\chi\in\widehat H$ and note that, by \eqref{eq:Phi-character},
		\[
		\Phi_\chi(F_\alpha)
		=\prod_{i=1}^{N}\bigl(\alpha_i-\chi(y_i)X^{x_i}\bigr).
		\]
		Whenever $\alpha_i=\chi(y_i)$, the corresponding factor is
		\[
		\alpha_i(1-X^{x_i})\in J(P).
		\]
		By \Cref{lem:nilpotence-index}, $J(P)^{d+1}=0$. Consequently,
		\begin{equation}\label{eq:one-component-annihilation}
			\#\{i:\alpha_i=\chi(y_i)\}\ge d+1
			\quad\Longrightarrow\quad
			\Phi_\chi(F_\alpha)=0.
		\end{equation}

		For a fixed character $\chi$, the number of scalar vectors with exactly $j$ matching coordinates is
		\[
		\binom Nj(e-1)^{N-j}.
		\]
		Indeed, the $j$ matching positions can be chosen in $\binom Nj$ ways; their values are forced to be $\chi(y_i)$, and every remaining coordinate has $e-1$ choices in $\mu_e$. Thus the number of vectors for which \eqref{eq:one-component-annihilation} is not guaranteed, namely the number with at most $d$ matches, is
		\begin{equation}\label{eq:bad-vectors}
			B_N(d,e)=\sum_{j=0}^{d}\binom Nj(e-1)^{N-j}.
		\end{equation}
		Call these vectors bad for $\chi$.
		
		There are $|\widehat H|=|H|$ characters. By the union bound, at most $|H|B_N(d,e)$ scalar vectors are bad for at least one character. The strict inequality \eqref{eq:finite-criterion} says that this number is smaller than
		\[
		e^N=|\mu_e^N|.
		\]
		Hence there is a vector $\alpha\in\mu_e^N$ which is good for every character. For this vector, \eqref{eq:one-component-annihilation} gives
		\[
		\Phi_\chi(F_\alpha)=0
		\qquad(\chi\in\widehat H).
		\]
		It follows from \Cref{lem:hall-detection} that $F_\alpha=0$, contradicting \eqref{eq:Falpha-nonzero}. Therefore no zero-sum-free sequence of length $N$ exists, and $\D(G)\le N$.
	\end{proof}

	Let $X$ be a binomial random variable with parameters $N$ and $1/\expn(H)$, which we denote by $X\sim\operatorname{Bin}(N,1/\expn(H))$. Then
	\begin{equation}\label{eq:binomial-identity}
		\Pr(X\le d)
		=\expn(H)^{-N}\sum_{j=0}^{d}\binom Nj(\expn(H)-1)^{N-j}.
	\end{equation}
	Therefore, \eqref{eq:finite-criterion} is equivalent to
	\begin{equation}\label{eq:probability-criterion}
		|H|\Pr\bigl(\operatorname{Bin}(N,1/\expn(H))\le \D(P)-1\bigr)<1.
	\end{equation}

	For $0<u,v<1$, define the binary relative entropy, or binary Kullback--Leibler divergence, by
	\begin{equation}
		\KL(u\Vert v)
		=u\log\frac uv+(1-u)\log\frac{1-u}{1-v},
		\label{eq:binary-relative-entropy}
	\end{equation}
	with the endpoint convention
	\begin{equation}
		\KL(0\Vert v)=\log\frac1{1-v}.
		\label{eq:binary-relative-entropy-endpoint}
	\end{equation}
	The terminology goes back to Kullback and Leibler~\cite{KullbackLeibler};
see also~\cite[Chapter~2]{CoverThomas}. The estimate below is the
standard Chernoff bound, related to classical work of
Hoeffding~\cite{Hoeffding}.

	\begin{lemma}\label{lem:KL-tail}
		Let $X\sim\operatorname{Bin}(N,v)$ with $0<v<1$, and let $0\le u<v$. Then
		\begin{equation}\label{eq:KL-tail}
			\Pr(X\le uN)\le\exp\{-N\KL(u\|v)\}.
		\end{equation}
	\end{lemma}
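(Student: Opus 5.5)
We prove \Cref{lem:KL-tail} by the exponential-moment (Chernoff) method applied to the lower tail. The case $u=0$ is handled directly, since then $\Pr(X\le 0)=\Pr(X=0)=(1-v)^N=\exp\{-N\log\frac1{1-v}\}$. This is exactly $\exp\{-N\KL(0\Vert v)\}$ by the endpoint convention \eqref{eq:binary-relative-entropy-endpoint}, so the bound holds with equality. From now on we assume $0<u<v$.

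For any $\lambda>0$, the event $\{X\le uN\}$ coincides with $\{e^{-\lambda X}\ge e^{-\lambda uN}\}$. Markov's inequality therefore gives
\[
 \Pr(X\le uN)\le e^{\lambda uN}\,\mathbb E\bigl[e^{-\lambda X}\bigr].
\]
Writing $X$ as a sum of $N$ independent Bernoulli$(v)$ variables, independence gives
\[
 \mathbb E\bigl[e^{-\lambda X}\bigr]=(1-v+ve^{-\lambda})^N.
\]
Hence
\[
 \Pr(X\le uN)\le\exp\bigl\{N\bigl(\lambda u+\log(1-v+ve^{-\lambda})\bigr)\bigr\}
\]
for every $\lambda>0$.

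It remains to optimize over $\lambda$. We set $t=e^{-\lambda}\in(0,1)$ and minimize $\phi(t)=-u\log t+\log(1-v+vt)$. Setting the derivative to zero gives the critical point
\[
 t^*=\frac{u(1-v)}{v(1-u)}.
\]
This satisfies $0<t^*<1$ precisely because $0<u<v$, which is where the hypothesis is used. Substituting, we get $1-v+vt^*=\frac{1-v}{1-u}$. The exponent then becomes
\[
 -u\log\frac{u(1-v)}{v(1-u)}+\log\frac{1-v}{1-u}
 =-\Bigl(u\log\frac uv+(1-u)\log\frac{1-u}{1-v}\Bigr)=-\KL(u\Vert v).
\]
This yields \eqref{eq:KL-tail}.

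The only point needing care is checking that the optimizer corresponds to an admissible $\lambda>0$, that is, $t^*\in(0,1)$, and this is immediate from $u<v$. The rest of the argument is routine algebra.
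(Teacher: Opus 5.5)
Your proposal is correct and follows the same route as the paper: the $u=0$ case by direct computation, then Markov's inequality applied to $e^{-\lambda X}$ with the same optimizing choice $e^{-\lambda}=\frac{u(1-v)}{v(1-u)}\in(0,1)$, which the paper writes as $e^{t}$ with $t<0$. Your explicit check that $1-v+ve^{-\lambda}=\frac{1-v}{1-u}$ simply fills in the ``direct substitution'' that the paper leaves to the reader.
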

	\begin{proof}
		If $u=0$, then
		\[
		\Pr(X\le0)=(1-v)^N=\exp\{-N\KL(0\|v)\}.
		\]
		Assume $0<u<v$. For $t<0$, the event $X\le uN$ implies $e^{tX}\ge e^{tuN}$, so Markov's inequality gives
		\[
		\Pr(X\le uN)
		\le e^{-tuN}\mathbb E(e^{tX})
		=e^{-tuN}(1-v+ve^t)^N.
		\]
		Choose
		\[
		e^t=\frac{u(1-v)}{v(1-u)}<1.
		\]
		A direct substitution yields
		\[
		-tu+\log(1-v+ve^t)=-\KL(u\|v),
		\]
		which proves \eqref{eq:KL-tail}.
	\end{proof}
	
	\begin{lemma}\label{lem:one-sided-KL}
		If $0<u<v<1$, then
		\begin{equation}\label{eq:one-sided-KL}
			\KL(u\|v)\ge v-u+u\log\frac uv.
		\end{equation}
	\end{lemma}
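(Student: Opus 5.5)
We expand the definition \eqref{eq:binary-relative-entropy} and isolate the second term. Since
\[
\KL(u\|v)=u\log\frac uv+(1-u)\log\frac{1-u}{1-v},
\]
the claimed inequality \eqref{eq:one-sided-KL} is equivalent to
\[
(1-u)\log\frac{1-u}{1-v}\ge v-u .
\]
So the whole task is a one-variable estimate on this term, and the term $u\log(u/v)$ plays no role.

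We apply the elementary bound $\log x\ge 1-1/x$ for $x>0$, which follows from $\log y\le y-1$ with $y=1/x$. Take $x=(1-u)/(1-v)$, which is positive because $0<u<v<1$. This gives
\[
\log\frac{1-u}{1-v}\ge 1-\frac{1-v}{1-u}=\frac{v-u}{1-u}.
\]
Multiplying by $1-u>0$ yields $(1-u)\log\frac{1-u}{1-v}\ge v-u$. Adding $u\log(u/v)$ to both sides then gives \eqref{eq:one-sided-KL}.

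There is no real obstacle here. The only point to check is that the multiplier $1-u$ is positive, so the inequality direction is preserved; this holds since $u<1$. Note also that the hypothesis $u<v$ is not actually needed for the inequality, only $0<u<1$ and $0<v<1$, but we keep it as stated.
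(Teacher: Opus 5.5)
Your proof is correct and is essentially the paper's argument: both reduce to $(1-u)\log\frac{1-u}{1-v}\ge v-u$ and apply $\log x\ge 1-1/x$ with $x=(1-u)/(1-v)$. Your side remark is also right: the paper notes $x>1$, but the elementary inequality holds for all $x>0$, so the hypothesis $u<v$ is not needed.
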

	
	\begin{proof}
		It is enough to prove
		\[
		(1-u)\log\frac{1-u}{1-v}\ge v-u.
		\]
		Take $x=(1-u)/(1-v)>1$. The elementary inequality $\log x\ge1-1/x$ gives
		\[
		(1-u)\log x
		\ge(1-u)\left(1-\frac{1-v}{1-u}\right)
		=v-u,
		\]
		as desired.
	\end{proof}

\section{An upper bound for the Davenport constant}\label{sec:order-exponent}
	
	In this section, we prove \Cref{thm:intro-uniform}. Write
$\psi(x)=\sum_{p^a\le x}\log p$ for the second Chebyshev function.
We use the explicit inequality of Rosser and
Schoenfeld~\cite[Theorem~12]{RosserSchoenfeld1962}:
\begin{equation}\label{eq:Rosser-Schoenfeld}
\psi(x)<1.03883x\qquad(x>0).
\end{equation}
	
	\begin{lemma}\label{lem:local-D-cost}
		Let
		\[
		P\cong C_{p^{b_1}}\oplus\cdots\oplus C_{p^{b_t}},
		\qquad b_i\le a,
		\qquad q=p^a.
		\]
		Then
		\begin{equation}\label{eq:local-D-cost}
			\D(P)-1\le\frac{\log|P|}{\log q}(q-1).
		\end{equation}
	\end{lemma}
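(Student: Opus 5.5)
By Olson's formula \eqref{eq:olson-formula} we have $\D(P)-1=\sum_{i=1}^t(p^{b_i}-1)$. Since $\log|P|=\sum_i b_i\log p$ and $\log q=a\log p$, the right-hand side of \eqref{eq:local-D-cost} equals $\sum_i \frac{b_i}{a}(p^a-1)$. The inequality therefore reduces to a termwise comparison: it suffices to show
\[
 p^{b}-1\le \frac{b}{a}\,(p^{a}-1)\qquad(0\le b\le a),
\]
and then sum over $i=1,\ldots,t$.

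For the termwise inequality, consider the function $f(x)=p^x-1$ on $[0,a]$. It is convex and satisfies $f(0)=0$. Consequently $f(x)/x$ is nondecreasing on $(0,a]$, and so $f(b)\le \frac{b}{a}f(a)$ for $0<b\le a$; the case $b=0$ is trivial. Equivalently, $x\mapsto(p^x-1)/x$ is increasing. An elementary alternative is the factorization $(p^b-1)/b=\frac{p-1}{b}\sum_{j<b}p^j$: this is the average of $p^j$ over $j<b$, times $p-1$, and it is nondecreasing in $b$.

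There is no real obstacle here. The only point needing care is the degenerate case $a=0$, that is, $q=1$, where the right-hand side is undefined. In that case $P$ is trivial. The statement is to be read with $q>1$, or else $P=0$ and both sides vanish by convention. Summing the termwise bounds then gives \eqref{eq:local-D-cost}.
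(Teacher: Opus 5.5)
Your proof is correct and follows the paper's argument: Olson's formula reduces the claim to the termwise bound $p^{b}-1\le\frac{b}{a}(p^{a}-1)$ for $0\le b\le a$, which both you and the paper obtain from the convexity of $p^x-1$ on $[0,a]$, and the termwise bounds are then summed over $i$. Your remark on the degenerate case $q=1$ is harmless, since the paper only applies the lemma to nontrivial $p$-groups, where $q>1$.
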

	
	\begin{proof}
		By the convexity of $p^x-1$ on $[0,a]$, we have
		\[
		p^b-1\le\frac ba(p^a-1)\qquad(0\le b\le a).
		\]
		Using \eqref{eq:olson-formula}, we have
		\[
		\D(P)-1
		=\sum_i(p^{b_i}-1)
		\le\frac{b_1+\cdots+b_t}{a}(q-1)
		=\frac{\log|P|}{\log q}(q-1),
		\]
		as desired.
	\end{proof}
	
	For $C>1$ and $0<y<C$, define
	\begin{equation}\label{eq:JC-def}
		J_C(y)=C-y-y\log\frac Cy,
	\end{equation}
	and extend continuously by $J_C(0)=C$. Since
	\[
	J_C'(y)=-\log\frac Cy<0\qquad(0<y<C),
	\]
	the function $J_C$ is strictly decreasing on $(0,C)$.
	
	\begin{proposition}\label{prop:distributional-order-exponent}
		Let $c_0=1.03883$. Suppose that $C>1$ and that there is a number $b\in(0,C)$ such that
		\begin{equation}\label{eq:distributional-hypotheses}
			J_C(b)>c_0,
			\qquad
			\int_0^b\left(1-\frac{c_0}{J_C(y)}\right)\,dy>1.
		\end{equation}
		Then every nontrivial finite abelian group $G$ satisfies
		\begin{equation}\label{eq:distributional-rounded}
			\D(G)\le\lceil C\,\expn(G)\frac{\log |G|}{\log\expn(G)}\rceil.
		\end{equation}
	\end{proposition}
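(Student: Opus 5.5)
The plan is to apply \Cref{prop:finite-sf} to a well-chosen Sylow decomposition of $G$. The Chernoff bound \Cref{lem:KL-tail} and its simplification \Cref{lem:one-sided-KL} turn the binomial criterion into an inequality involving $J_C$. If this inequality failed for every prime dividing $\expn(G)$, a weighted sum over those primes would contradict the integral hypothesis in \eqref{eq:distributional-hypotheses}, via the Chebyshev bound \eqref{eq:Rosser-Schoenfeld}.

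Write $n=\expn(G)$, $L=\log|G|/\log n$ and $N=\lceil CnL\rceil$. If $G$ is a $p$-group, then \eqref{eq:olson-formula} and \Cref{lem:local-D-cost} give $\D(G)\le 1+L(n-1)\le N$, since $C>1$. So assume $n$ has at least two prime divisors. For each $p\mid n$ write $G=P_p\oplus H_p$, with $P_p$ the Sylow $p$-subgroup, $q_p=p^{a}$ the $p$-part of $n$, $e_p=\expn(H_p)=n/q_p$ and $d_p=\D(P_p)-1$. Put $w_p=\log|P_p|/\log|G|$, so that $\sum_p w_p=1$.

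By \Cref{lem:local-D-cost}, $d_p\le (q_p-1)\log|P_p|/\log q_p<q_pL\,y_p$ with
\[
 y_p\coloneqq \frac{w_p\log n}{\log q_p}.
\]
Suppose that $y_p<C$. Take $u=d_p/N$ and $v=1/e_p$. We have $Nv\ge CLq_p$ and $u/v<y_p/C<1$. Combining \Cref{lem:KL-tail} with \Cref{lem:one-sided-KL}, and using that $x\mapsto x-d+d\log(d/x)$ is increasing for $x>d$ while $J_C$ is decreasing, I expect to obtain
\[
 -\log\Pr\bigl(\operatorname{Bin}(N,1/e_p)\le d_p\bigr)\ge Nv-d_p+d_p\log\frac{d_p}{Nv}\ge q_pL\,J_C(y_p).
\]
The bookkeeping with the ceiling and the strict inequality $d_p<q_pLy_p$ has to be checked here. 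Hence, by \eqref{eq:probability-criterion}, $\D(G)\le N$ follows as soon as some $p$ has $y_p<C$ and $q_pLJ_C(y_p)>\log|H_p|$. It suffices to have $q_pJ_C(y_p)\ge\log n$, since $\log|H_p|<\log|G|=L\log n$.

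Now assume the contrary: every $p\mid n$ has $y_p\ge C$ or $q_p<\log n/J_C(y_p)$. Fix $y\in(0,b)$ and consider the primes with $y_p\le y$. For them $y_p<C$, so $q_p<\log n/J_C(y_p)\le \log n/J_C(y)$, because $J_C$ is decreasing. Since $J_C(y)>J_C(b)>c_0$, these $q_p$ are distinct prime powers below $\log n/J_C(y)$. Their logarithms are dominated by $\psi$, so by \eqref{eq:Rosser-Schoenfeld}
\[
 \sum_{y_p\le y}\log q_p\le\psi\!\left(\frac{\log n}{J_C(y)}\right)<\frac{c_0\log n}{J_C(y)}.
\]
As $\sum_{p\mid n}\log q_p=\log n$, we get $\sum_{y_p>y}\log q_p>\log n\,(1-c_0/J_C(y))$. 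Integrating over $y\in(0,b)$ and using the layer-cake identity yields
\[
 \sum_p y_p\log q_p=\int_0^\infty\sum_{y_p>y}\log q_p\,dy>\log n\int_0^b\Bigl(1-\frac{c_0}{J_C(y)}\Bigr)dy>\log n.
\]
But $y_p\log q_p=w_p\log n$ by definition, so the left side equals $\log n$, a contradiction. Hence some prime satisfies the criterion, and $\D(G)\le N$.

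The step I expect to be most delicate is the passage from the Chernoff bound to $q_pLJ_C(y_p)$. It requires checking the monotonicity in both the mean $Nv$ and the parameter $d$, together with the endpoint case $d_p=0$. This is needed so that the rounding in $N$ and the inequality in \Cref{lem:local-D-cost} both push in the favourable direction.
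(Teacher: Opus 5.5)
Your proposal is correct and follows essentially the same route as the paper: you select a Sylow subgroup by the same mean-one layer-cake argument, using the Chebyshev bound \eqref{eq:Rosser-Schoenfeld} on the prime powers $q_p$. You then verify \eqref{eq:probability-criterion} by the same Chernoff/relative-entropy monotonicity in the mean and in $d_p$, and the differences are cosmetic: you use unnormalized masses $\log q_p$ and select with $y_p<C$ rather than $y_p\le b$; also, the endpoint case $d_p=0$ you flag never arises, since each $P_p$ is nontrivial and so $d_p\ge1$.
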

	
	\begin{proof}
		If $G$ is a $p$-group, \eqref{eq:olson-formula} together with \Cref{lem:local-D-cost} shows
		\[
		\D(G)\le1+(\expn(G)-1)\frac{\log|G|}{\log\expn(G)}
\le\expn(G)\frac{\log|G|}{\log\expn(G)}
<C\expn(G)\frac{\log|G|}{\log\expn(G)}.
		\]
		We may therefore assume that at least two Sylow subgroups of $G$ are nontrivial.
		
		Let
		\[
		G=\bigoplus_{p\mid|G|}G_p\]
		be the decomposition of $G$ into Sylow subgroups.
		Note that
		\[
		\sum_{p} \log\expn(G_p) =\log\expn(G),
		\qquad
		\sum_p\log|G_p|=\log |G|.
		\]
		Put
		\begin{equation}\label{eq:distributional-weights}
			w_p=\frac{\log\expn(G_p)}{\log\expn(G)},
			\qquad
			y_p=\frac{\log|G_p|}{\log\expn(G_p)} \frac{\log \expn(G)}{\log|G|}.
		\end{equation}
		Then $w_p>0$, $\sum_pw_p=1$, and
		\begin{equation}\label{eq:distributional-mean-one}
			\sum_pw_py_p=1.
		\end{equation}
		Thus the finite collection of values $y_p$, with masses $w_p$, is a probability distribution of mean one.
		
		We first show that there is a prime $p$ such that
		\begin{equation}\label{eq:good-distributional-p}
			y_p\le b\qquad \text{and}\qquad  \expn(G_p)J_C(y_p)>\log\expn(G).
		\end{equation}
		Assume to the contrary that \eqref{eq:good-distributional-p} does not hold for any prime $p$. Let
		\[
		F(a)=\sum_{y_p\le a}w_p
		\qquad(a\ge0)
		\]
		be the distribution function. Fix $0<a\le b$. Since $J_C$ is decreasing and positive on $[0,b]$, every prime with $y_p\le a$ satisfies
		\[
		\expn(G_p)\le\frac{\log\expn(G)}{J_C(y_p)}\le\frac{\log\expn(G)}{J_C(a)}.
		\]
		The numbers $\expn(G_p)$ are pairwise coprime prime powers. Hence their product divides
		\[
		\operatorname{lcm}\!\left(1,2,\ldots,
		\left\lfloor\frac{\log\expn(G)}{J_C(a)}\right\rfloor\right),
		\]
		whenever the set is nonempty. Consequently, using
		\[
		\psi(x)=\log\operatorname{lcm}(1,2,\ldots,\lfloor x\rfloor)
		\]
		and \eqref{eq:Rosser-Schoenfeld}, we have
		\begin{align}
			F(a)
			&=\frac1{\log\expn(G)}\sum_{y_p\le a}\log\expn(G_p)\notag\\
			&\le\frac1{\log\expn(G)}\psi\!\left(\frac{\log\expn(G)}{J_C(a)}\right)
			<\frac{c_0}{J_C(a)}.
			\label{eq:distributional-cdf}
		\end{align}
		
		Let $Y$ be the random variable taking the value $y_p$ with probability $w_p$. By \eqref{eq:distributional-mean-one}, we have
		\[
		1=\mathbb E Y
		=\int_0^\infty\Pr(Y>a)\,da
		=\int_0^\infty(1-F(a))\,da.
		\]
		Using \eqref{eq:distributional-cdf} on $[0,b]$ shows
		\[
		1>\int_0^b\left(1-\frac{c_0}{J_C(a)}\right)\,da,
		\]
		which contradicts our assumption \eqref{eq:distributional-hypotheses}. This proves \eqref{eq:good-distributional-p}.

We now apply the binomial criterion to this Sylow subgroup.
Fix such a prime $p$, and denote
		\[
		P=G_p,\quad H=\bigoplus_{\ell\ne p}G_\ell,
		\quad e=\expn(H)=\frac {\expn(G)}{\expn(G_p)},
		\quad d=\D(P)-1.
		\]
		Since there are at least two nontrivial Sylow subgroups, $H$ is nontrivial. Let
		\[
		N=\lceil C\expn(G)\frac{\log |G|}{\log\expn(G)}\rceil,
		\qquad c=\frac N{\expn(G)},
		\qquad \delta=\frac d{\expn(G_p)}.
		\]
		By \Cref{lem:local-D-cost},
		\begin{equation}\label{deltale}
			\delta\le\frac{\log|G_p|}{\log\expn(G_p)} \frac{\expn(G_p)-1}{\expn(G_p)}<\frac{\log|G_p|}{\log\expn(G_p)} =\frac{\log|G|}{\log\expn(G)}  y_p.
		\end{equation}
		By the choice of $p$, $y_p\le b<C$, and therefore
		\begin{equation}\label{constantcle}
			0<\delta<\frac{\log|G|}{\log\expn(G)} y_p<C\frac{\log|G|}{\log\expn(G)}\le c.
		\end{equation}
		In particular $d/N<1/e$. We now apply \Cref{lem:KL-tail,lem:one-sided-KL}. Let
		\[
		I(c,\delta)=c-\delta-\delta\log\frac c\delta.
		\]
		For $0<\delta<c$,
		\[
		\frac{\partial I}{\partial c}=1-\frac\delta c>0,
		\qquad
		\frac{\partial I}{\partial\delta}=-\log\frac c\delta<0,
		\]
		so $I$ is increasing in $c$ and decreasing in $\delta$. Therefore, by \Cref{lem:one-sided-KL} together with \eqref{deltale} and \eqref{constantcle}, we have
		\begin{align*}
			N\KL\!\left(\frac dN\middle\|\frac1e\right)
			&\ge \expn(G_p)\left(c-\delta-\delta\log\frac c\delta\right)\\
			&>\expn(G_p)\frac{\log|G|}{\log\expn(G)}\left(C-y_p-y_p\log\frac C{y_p}\right)\\
			&=\expn(G_p)\frac{\log|G|}{\log\expn(G)} J_C(y_p)\\
			&>\frac{\log|G|}{\log\expn(G)} \log\expn(G)=\log|G|\ge\log|H|.
		\end{align*}
		It follows from \Cref{lem:KL-tail} and the above discussion that
		\[
		|H|\Pr\!\left(\operatorname{Bin}(N,1/e)\le d\right)<1.
		\]
		By \eqref{eq:probability-criterion} and \Cref{prop:finite-sf}, we have $\D(G)\le N$ and \eqref{eq:distributional-rounded} follows.
	\end{proof}
	
	\begin{proof}[Proof of \Cref{thm:intro-uniform}]
Let
\[
 C_1=\frac{971}{200}=4.855,
 \qquad b=\frac{51}{25}=2.04.
\]
A direct calculation gives
\begin{equation}\label{eq:4855-certificate}
 J_{C_1}(b)>1.0461>1.03883,
 \qquad
 \int_0^b\left(1-\frac{1.03883}{J_{C_1}(y)}\right)dy
 >1.0007>1.
\end{equation}
Hence \Cref{prop:distributional-order-exponent} gives
\begin{equation}\label{eq:4855-rounded}
 \D(G)\le\left\lceil4.855\,
 \expn(G)\frac{\log|G|}{\log\expn(G)}\right\rceil.
\end{equation}
Put $U=\expn(G)\log|G|/\log\expn(G)$. If $U\ge200$, then
\[
 \left\lceil4.855U\right\rceil<4.855U+1\le4.86U.
\]
Suppose that $U<200$. Write $n=\expn(G)$ and
$\tau=\log|G|/\log n\ge1$, so that $U=n\tau$. The classical bound
\eqref{eq:classical} gives
\[
 \frac{\D(G)}U\le\frac1\tau+
 \left(1-\frac1\tau\right)\log n.
\]
If $n\le128$, the right-hand side is at most
$\max\{1,7\log2\}<4.86$. If $n\ge129$, then $n\le U<200$ and
$\tau<200/129$. Since $\log n<\log200<6$,
\[
 \frac{\D(G)}U
 <1+\left(1-\frac{129}{200}\right)6
 =3.13<4.86.
\]
This proves \eqref{eq:main-uniform}.
\end{proof}

\section{Statistical behavior}\label{sec:order-density}

%The proofs in this section use \Cref{thm:intro-uniform}, classical bounds for $\D(G)$, and counting estimates for $|G|/\expn(G)$.
In this section, we prove \Cref{thm:intro-global-scale} and \Cref{thm:intro-mean-ratio}.

%We first remove the largest cyclic invariant factor and count the remaining group. The trivial group is used only as an auxiliary core, with exponent $1$ and rank $0$; all counts involving $A(X)$ exclude it.

%\subsection{Counting by coexponent}

Let $a(m)$ be the number of isomorphism classes of finite abelian groups of order $m$, and therefore
	\[
	A(X)=\sum_{2\le m\le X}a(m)=|\mathcal G(X)|.
	\]
	As in the introduction, $[G]$ denotes the isomorphism class of $G$.
The classical asymptotic
\begin{equation}\label{eq:A-linear}
 A(X)\sim\mathfrak A X,
 \qquad
 \mathfrak A=\prod_{j=2}^{\infty}\zeta(j)<\infty,
\end{equation}
goes back to Erd\H{o}s and Szekeres \cite{ErdosSzekeres1935}; see also
Heath-Brown \cite[Section~1]{HeathBrown1991} for stronger expansions.

For a nontrivial finite abelian group $G$, we denote
	\begin{equation}\label{eq:coexponent}
		Q(G)=\frac{|G|}{\expn(G)}.
	\end{equation}
	
	\begin{lemma}\label{lem:coexponent-parameterization}
		For every nontrivial finite abelian group $G$, there is a unique pair $(H,t)$, where $H$ is a finite abelian group and $t\ge1$ is an integer, such that
		\begin{equation}\label{eq:core-param}
			G\cong H\oplus C_{t\expn(H)}
		\end{equation}
		and
		\begin{equation}\label{eq:core-order}
			|H|=Q(G).
		\end{equation}
	\end{lemma}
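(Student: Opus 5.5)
Write $G\cong C_{n_1}\oplus\cdots\oplus C_{n_r}$ with $1<n_1\mid\cdots\mid n_r=\expn(G)$. The plan is to read off $H$ as the invariant factors other than the last one. For existence, put $H=C_{n_1}\oplus\cdots\oplus C_{n_{r-1}}$ (the trivial group when $r=1$, with $\expn(H)=1$) and $t=n_r/n_{r-1}$ (with $n_0:=1$). Since $n_{r-1}\mid n_r$, we get $\expn(H)=n_{r-1}$. Hence $t$ is a positive integer and $C_{t\expn(H)}=C_{n_r}$, which gives \eqref{eq:core-param}. Also $|H|=\prod_{i<r}n_i=|G|/n_r=Q(G)$, which is \eqref{eq:core-order}.

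For uniqueness, suppose $G\cong H\oplus C_{t\expn(H)}$ with $t\ge1$. Write $H\cong C_{m_1}\oplus\cdots\oplus C_{m_s}$ with $1<m_1\mid\cdots\mid m_s=\expn(H)$. Then $m_s\mid t m_s$, so
\[
 C_{m_1}\oplus\cdots\oplus C_{m_s}\oplus C_{tm_s}
\]
is already in invariant factor form, provided $tm_s>1$. This holds because $G$ is nontrivial: if $H$ is trivial then $tm_s$ reads $t\cdot 1=t$, and $t>1$ is forced by $G\ne0$.

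By uniqueness of invariant factors, $tm_s=n_r$, $s=r-1$, and $m_i=n_i$ for $i<r$. This determines $H$ up to isomorphism and $t=n_r/n_{r-1}$. Note that the uniqueness argument does not even use \eqref{eq:core-order}; that condition is automatic once \eqref{eq:core-param} holds, since $\expn(G)=t\expn(H)$ gives $|G|/\expn(G)=|H|$.

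The only point needing care is the bookkeeping for the degenerate case $r=1$ (cyclic $G$), where $H$ is trivial and $t=|G|$. Beyond that, the argument is a routine application of the structure theorem for finite abelian groups; I expect no real obstacle.
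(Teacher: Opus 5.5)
Your proof is correct and follows essentially the same route as the paper: you take $H$ to be the first $r-1$ invariant factors for existence, and for uniqueness you observe that $H\oplus C_{t\expn(H)}$ is already in invariant-factor form and invoke uniqueness of invariant factors. Your side remark that the condition $|H|=Q(G)$ follows automatically from $G\cong H\oplus C_{t\expn(H)}$ is also correct.
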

	
	\begin{proof}
		Write $G$ in invariant-factor form, and put
		\[
		H=C_{n_1}\oplus\cdots\oplus C_{n_{r-1}},
		\]
		with $H$ trivial if $r=1$. Then $\expn(H)=n_{r-1}$ when $r\ge2$, and $n_{r-1}\mid n_r$. Thus
		\[
		n_r=t\expn(H)
		\]
		for a unique integer $t\ge1$, and \eqref{eq:core-param} holds. Also,
		\[
		|H|=\frac{|G|}{n_r}=Q(G).
		\]
		Conversely, if $H$ is trivial, then $H\oplus C_t=C_t$, and the pair is determined by the single cyclic invariant factor. Suppose now that $H$ is nontrivial. If $(H,t)$ is given and
		\[
		H\cong C_{d_1}\oplus\cdots\oplus C_{d_s},
		\qquad d_1\mid\cdots\mid d_s=\expn(H),
		\]
		then
		\[
		H\oplus C_{t\expn(H)}
		\]
		is already in invariant-factor form. The invariant-factor decomposition is unique, so the pair $(H,t)$ is unique.
	\end{proof}
	
	For $q\ge1$, define
	\begin{equation}\label{eq:beta-def} \beta(q)=\sum_{\substack{[H]\\|H|=q}}\frac1{\expn(H)},
	\end{equation}
	where the trivial group has exponent $1$.
	
	\begin{lemma}\label{lem:B-convergence}
		For every $\sigma>0$,
		\begin{equation}\label{eq:B-sigma}
			B(\sigma)=\sum_{q\ge1}\frac{\beta(q)}{q^\sigma}<\infty.
		\end{equation}
	\end{lemma}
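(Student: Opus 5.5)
We need $\sum_q \beta(q) q^{-\sigma}<\infty$ for every $\sigma>0$. The plan is to show that $\beta$ is multiplicative, write $B(\sigma)$ as an Euler product, and check that the product converges.

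\textbf{Multiplicativity.} A finite abelian group of order $q=\prod p^{k_p}$ is the direct sum of its Sylow subgroups, and its exponent is the product of the exponents of those subgroups. Hence
\[
\beta(q)=\prod_{p^{k}\,\Vert\, q}\beta(p^{k}),\qquad \beta(p^k)=\sum_{\lambda\vdash k}p^{-\lambda_1},
\]
where a partition $\lambda$ of $k$ corresponds to $\bigoplus_i C_{p^{\lambda_i}}$, whose exponent is $p^{\lambda_1}$. Since all terms are nonnegative, it suffices to show that the Euler product
\[
\prod_p\Bigl(1+\sum_{k\ge1}\beta(p^k)p^{-k\sigma}\Bigr)
\]
converges. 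Equivalently, we need $\sum_p\sum_{k\ge1}\beta(p^k)p^{-k\sigma}<\infty$.

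\textbf{Bounding the local factors.} For $k\ge1$, the largest part of any partition of $k$ is at least $1$, so
\[
\beta(p^k)\le \pi(k)\,p^{-1},
\]
where $\pi(k)$ is the partition number. Because $\pi(k)$ grows subexponentially, we have $\pi(k)\le C_\eta e^{\eta k}$ for each $\eta>0$; in fact the crude bound $\pi(k)\le 2^{k}$ is already enough. This gives
\[
\sum_{k\ge1}\beta(p^k)p^{-k\sigma}\le p^{-1}\sum_{k\ge1}\pi(k)p^{-k\sigma}.
\]

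\textbf{The main obstacle.} The difficulty is that $\sigma$ may be arbitrarily small, so $p^{-\sigma}$ is close to $1$ for small primes. The bound $\pi(k)\le 2^k$ only makes the inner sum geometric once $2p^{-\sigma}<1$.

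\emph{Large primes.} For all but finitely many primes we have $2p^{-\sigma}\le 1/2$. For these primes the inner sum is at most $4p^{-\sigma}$, so the local term is $O(p^{-1-\sigma})$, which is summable over primes.

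\emph{Small primes.} For each of the finitely many remaining primes, the inner sum $\sum_k \pi(k)x^k$ with $x=p^{-\sigma}<1$ equals $\prod_{m\ge1}(1-x^m)^{-1}$. This product is finite for every $0\le x<1$, since the partition generating function has radius of convergence $1$.

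Combining the two cases, each Euler factor is finite and the factors for large primes are $1+O(p^{-1-\sigma})$. Therefore the Euler product converges, and $B(\sigma)<\infty$.
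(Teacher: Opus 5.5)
Your proof is correct and follows the paper's argument essentially step for step. Both proofs use the multiplicativity of $\beta$ via the Sylow decomposition and the Euler product indexed by partitions, the bound $\beta(p^k)\le \pi(k)p^{-1}$, convergence of the partition generating function for the finitely many small primes, and the composition bound $\pi(k)\le 2^{k-1}$ to get local terms $O_\sigma(p^{-1-\sigma})$ for large primes.
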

	
	\begin{proof}
		The function $\beta$ is multiplicative. Indeed, if $(q_1,q_2)=1$, every group of order $q_1q_2$ decomposes uniquely as $H_1\oplus H_2$ with $|H_i|=q_i$, and
		\[
		\expn(H_1\oplus H_2)=\expn(H_1)\expn(H_2).
		\]
		A finite abelian $p$-group corresponds to a partition $\lambda=(\lambda_1\ge\lambda_2\ge\cdots)$ with order $p^{|\lambda|}$ and  exponent $p^{\lambda_1}$. Therefore
		\begin{equation}\label{eq:B-Euler}
			B(\sigma)
			=\prod_p\left(1+\sum_{\lambda\ne\varnothing}p^{-\lambda_1-\sigma|\lambda|}\right).
		\end{equation}
		For each fixed prime $p$, the local sum converges. In fact, it is bounded by
		\[
		p^{-1}\sum_{k\ge1}p(k)p^{-\sigma k},
		\]
		where $p(k)$ denotes the partition number. The partition generating function
		\[
		\sum_{k\ge0}p(k)x^k=\prod_{j\ge1}(1-x^j)^{-1}
		\]
		converges for $0<x<1$: indeed,
		\[
		\sum_{j\ge1}-\log(1-x^j)
		\le\sum_{j\ge1}\frac{x^j}{1-x^j}
		\le\frac{x}{(1-x)^2}<\infty.
		\]
		Taking $x=p^{-\sigma}$ proves convergence of every local factor.
		
		For uniform control at large primes, use the elementary bound $p(k)\le2^{k-1}$, obtained by mapping a partition of $k$ to a composition of $k$. For all sufficiently large $p$, one has $2p^{-\sigma}<1$, and
		\begin{align*}
			\sum_{\lambda\ne\varnothing}p^{-\lambda_1-\sigma|\lambda|}
			&\le p^{-1}\sum_{k\ge1}p(k)p^{-\sigma k}\\
			&\le p^{-1}\sum_{k\ge1}2^{k-1}p^{-\sigma k}
			=\frac{p^{-1-\sigma}}{1-2p^{-\sigma}}
			=O_\sigma(p^{-1-\sigma}).
		\end{align*}
		The finitely many remaining local factors converge individually, while
		\[
		\sum_pp^{-1-\sigma}<\infty.
		\]
		Thus the Euler product \eqref{eq:B-Euler} converges.
	\end{proof}
	
	Denote
	\begin{equation}\label{eq:T-def}
		T(Y)=\sum_{q>Y}\frac{\beta(q)}q.
	\end{equation}
	By \Cref{lem:B-convergence} with $\sigma=1$, this is finite for every $Y\ge1$ and tends to zero as $Y\to\infty$.
	
	\begin{proposition}\label{prop:coexponent-tails}
		For all $X,Y\ge1$,
		\begin{equation}\label{eq:exact-coexponent-tail}
			\#\{[G]:1<|G|\le X,\ Q(G)>Y\}
			\le X\sum_{q>Y}\frac{\beta(q)}q
			=XT(Y).
		\end{equation}
		For every $0<\eta<1$,
		\begin{equation}\label{eq:polynomial-coexponent-tail}
			\#\{[G]:1<|G|\le X,\ Q(G)>Y\}
			\le B(\eta)XY^{-1+\eta}.
		\end{equation}
		In particular, for every $\varepsilon>0$ there is $Y$ such that
		\[
		\limsup_{X\to\infty}
		\frac{\#\{[G]:1<|G|\le X,\ Q(G)>Y\}}{A(X)}
		<\varepsilon.
		\]
	\end{proposition}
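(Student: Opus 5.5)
We will use \Cref{lem:coexponent-parameterization} to turn the count into a sum over the core group $H$, and then bound the number of admissible $t$ for each $H$.

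By \Cref{lem:coexponent-parameterization}, each class $[G]$ with $Q(G)>Y$ corresponds injectively to a pair $([H],t)$. Here $|H|=q=Q(G)>Y$, $t\ge1$, and
\[
|G|=q\,t\,\expn(H)\le X.
\]
For a fixed $[H]$ of order $q$, the number of admissible $t$ is therefore at most
\[
\left\lfloor \frac{X}{q\expn(H)}\right\rfloor\le\frac{X}{q\expn(H)}.
\]
Summing over all $[H]$ with $|H|=q$ and using the definition \eqref{eq:beta-def} of $\beta(q)$, the number of pairs is at most $X\beta(q)/q$. Summing over $q>Y$ gives \eqref{eq:exact-coexponent-tail}, with the finiteness of $T(Y)$ supplied by \Cref{lem:B-convergence} at $\sigma=1$.

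For \eqref{eq:polynomial-coexponent-tail}, we use Rankin's trick. For $q>Y$ and $0<\eta<1$ we have $1\le (q/Y)^{1-\eta}$, so
\[
\frac{\beta(q)}{q}\le Y^{-1+\eta}\,\frac{\beta(q)}{q^{\eta}}.
\]
This gives
\[
T(Y)\le Y^{-1+\eta}\sum_{q\ge1}\frac{\beta(q)}{q^\eta}=B(\eta)Y^{-1+\eta},
\]
and this is finite by \Cref{lem:B-convergence}. Combined with \eqref{eq:exact-coexponent-tail}, this proves \eqref{eq:polynomial-coexponent-tail}.

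For the density statement, divide by $A(X)$ and use \eqref{eq:A-linear}:
\[
\limsup_{X\to\infty}\frac{XT(Y)}{A(X)}=\frac{T(Y)}{\mathfrak A}.
\]
Since $T(Y)\to0$ as $Y\to\infty$, we can choose $Y$ large enough that this is less than $\varepsilon$. Alternatively, we can use the polynomial bound with, say, $\eta=1/2$.

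The only point needing care is the injectivity of $[G]\mapsto([H],t)$. This is exactly the uniqueness part of \Cref{lem:coexponent-parameterization}, and it includes the case where $H$ is trivial, which has $\expn(H)=1$ by convention. That case never arises when $Y\ge1$, since then $q>Y\ge1$. So no step here is a genuine obstacle; the proof is a direct counting argument.
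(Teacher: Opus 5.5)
Your proposal is correct and follows the paper's proof essentially step by step: the same fiber count through the core parameterization with each floor replaced by its argument, the same comparison $q^{-1}\le Y^{-1+\eta}q^{-\eta}$ for $q>Y$, and the same division by $A(X)\sim\mathfrak A X$. One small point: injectivity of $[G]\mapsto([H],t)$ already follows from the isomorphism $G\cong H\oplus C_{t\expn(H)}$, and uniqueness is only what makes the map well defined, but this does not affect your argument.
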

	
	\begin{proof}
		By \Cref{lem:coexponent-parameterization}, for each $q\ge2$,
		\begin{equation}\label{eq:exact-fiber-count}
			\#\{[G]:1<|G|\le X,\ Q(G)=q\}
			=\sum_{\substack{[H]\\|H|=q}}
			\left\lfloor\frac{X}{q\expn(H)}\right\rfloor.
		\end{equation}
		Indeed, the integer $t$ in \eqref{eq:core-param} is constrained by
		\[
		|G|=|H|t\expn(H)=qt\expn(H)\le X.
		\]
		Summing \eqref{eq:exact-fiber-count} over $q>Y$ and replacing each floor by its argument gives \eqref{eq:exact-coexponent-tail}.
		
		For $q>Y$, one has
		\[
		q^{-1}\le Y^{-1+\eta}q^{-\eta}.
		\]
		Hence
		\[
		T(Y)
		=\sum_{q>Y}\frac{\beta(q)}q
		\le Y^{-1+\eta}\sum_{q\ge1}\frac{\beta(q)}{q^\eta}
		=B(\eta)Y^{-1+\eta},
		\]
		which proves \eqref{eq:polynomial-coexponent-tail}. Divide \eqref{eq:exact-coexponent-tail} by $A(X)\sim\mathfrak A X$. Given $\varepsilon>0$, choose $Y$ so large that $T(Y)<\mathfrak A\varepsilon/2$. For all sufficiently large $X$, $A(X)>\mathfrak AX/2$, and hence the proportion of groups with $Q(G)>Y$ is less than $\varepsilon$.
	\end{proof}
	
%	\subsection{The mean Davenport ratio}

The coexponent tail estimate can be transferred to $\D(G)$ through
the following consequence of the bounds of Bhowmik and Schlage-Puchta.
For every nontrivial finite abelian group $G$, put
$\Def(G)=\D(G)-\Ds(G)$.

\begin{lemma}\label{lem:coexponent-transfer}
For every nontrivial finite abelian group $G$,
\begin{equation}\label{eq:universal-defect}
 0\le\Def(G)\le Q(G)-1.
\end{equation}
\end{lemma}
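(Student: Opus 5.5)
The lower bound $\Def(G)\ge0$ is the standard construction \eqref{eq:Dstar}, so all the work is in the upper bound $\D(G)\le\Ds(G)+Q(G)-1$.

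For the upper bound I will use the parametrization of \Cref{lem:coexponent-parameterization}. Write $G\cong H\oplus C_{n}$, where $n=\expn(G)$ and $|H|=Q(G)$. The basic tool is the standard subgroup inequality
\[
 \D(G)\le \D(K)+\D(G/K)-1 ,
\]
or rather its sharper form $\D(G)\le (\D(K)-1)\expn(G/K)+\D(G/K)$. I plan to avoid both of these, because they cost multiplicative factors. Instead I will follow the route the paper signals, namely the bounds of Bhowmik and Schlage-Puchta. Their Lemma/Theorem gives
\[
 \D(G)\le \expn(G)+\frac{|G|}{\expn(G)}-1 ,
\]
and the paper quotes this only under the hypothesis $\expn(G)\ge\sqrt{|G|}$. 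Their general bound is $\D(G)\le \D(K)+|G|/|K|-1$, or more precisely $\D(G)\le \expn(G)+|G|/\expn(G)-1$ whenever $G$ has a cyclic subgroup of order $\expn(G)$. In that second form the hypothesis always holds: take $K=C_n$. The underlying elementary argument, which I would reprove, runs as follows. Take a zero-sum free sequence of length $\ell$ and greedily extract disjoint short subsequences whose sums lie in $C_n$. Since $|G/C_n|=Q(G)$, any $Q(G)$ elements contain a nonempty subsequence with sum in $C_n$ (by $\D(G/C_n)\le|G/C_n|$). So from $\ell$ terms we obtain at least $\ell-Q(G)+1$ disjoint nonempty blocks, roughly, with sums in $C_n$. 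Here is the subtlety: extracting blocks one at a time, each of length at most $Q(G)$, does not give that count. The right count comes from repeatedly using prefix sums modulo $C_n$. Among the first $k$ terms, the $k+1$ prefix sums modulo $C_n$ take at most $Q(G)$ values. The extraction therefore yields a chain of consecutive blocks, numbering at least $\ell+1-Q(G)$ in the appropriate arrangement, with sums in $C_n$. The block sums form a zero-sum free sequence in $C_n$, so their number is at most $n-1$. This gives $\ell\le n+Q(G)-2$, hence $\D(G)\le n+Q(G)-1$.

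That bound compares with $\Ds(G)$ only as $n+Q(G)-1$ versus $\Ds(G)\ge n$. It gives $\Def(G)\le Q(G)-1$ only after noting $\Ds(G)\ge n$, which is true since $\Ds(G)=1+\sum(n_i-1)\ge n_r$. Then
\[
 \Def(G)=\D(G)-\Ds(G)\le n+Q(G)-1-n=Q(G)-1 .
\]
The plan therefore rests on the inequality $\D(G)\le\expn(G)+Q(G)-1$ holding for all $G$.

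The main obstacle is justifying the block-extraction step cleanly for all $G$, without the $\expn(G)\ge\sqrt{|G|}$ hypothesis that the paper quotes from Bhowmik and Schlage-Puchta. I would first attempt the prefix-sum/pigeonhole argument above. If it gets messy, I would cite the general inequality $\D(G)\le\D(K)+|G/K|-1$ for a subgroup $K$, as in the Geroldinger–Halter-Koch book, specialized to $K=C_{\expn(G)}$ with $\D(C_n)=n$.
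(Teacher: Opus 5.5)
Your reduction is the right one and agrees with the paper: $\Def(G)\ge0$ is \eqref{eq:Dstar}, and once $\D(G)\le n+Q(G)-1$ is known for every $G$, subtracting $\Ds(G)\ge n$ gives the upper bound. The gap is in how you justify $\D(G)\le n+Q(G)-1$ outside the range $n\ge\sqrt{|G|}$.

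Your elementary argument rests on a counting claim: that a zero-sum free sequence of length $\ell$ contains at least $\ell+1-Q(G)$ pairwise disjoint nonempty subsequences with sums in $C_n$. This claim is false. Take $G=C_2\oplus C_4$ and $K=\{0\}\oplus C_4$, so $Q=2$, and let $S=(1,0)\bdot(1,1)\bdot(1,1)\bdot(1,1)$.
\begin{itemize}
\item $S$ is zero-sum free: its even-length subsums are $(0,1),(0,2),(0,3)$, and its odd-length subsums have first coordinate $1$.
\item A subsequence has sum in $K$ only if it has even length, so at most $2$ disjoint such blocks exist, not $\ell+1-Q=3$.
\end{itemize}
The prefix-sum pigeonhole does produce $\ell+1-Q$ position intervals with sums in $K$; here they are $\{1,2\}$, $\{3,4\}$ and $\{2,3\}$. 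But intervals attached to different residue classes overlap. Subfamilies of them therefore do not correspond to subsequences of $S$, and the step ``the block sums form a zero-sum free sequence in $C_n$'' has no basis. An honest greedy extraction yields only about $\ell/\D(G/C_n)$ disjoint blocks, which gives a multiplicative bound, not an additive one.

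Your fallback fails too. The inequality $\D(G)\le\D(K)+|G/K|-1$ does not hold for subgroups in general: for $G=C_6$ and $K=C_2$ it would give $\D(C_6)\le4$, whereas $\D(C_6)=6$. It is also not the form in which Bhowmik and Schlage-Puchta state their result. For $K=C_{\expn(G)}$ this inequality is not a specialization of any general subgroup estimate. It is exactly what the two parts of their Theorem~1.1 give together.

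The missing ingredient is the second assertion of that theorem: if $n<\sqrt{|G|}$, then $\D(G)\le2\sqrt{|G|}-1$. By the AM--GM inequality, $2\sqrt{|G|}-1=2\sqrt{nQ}-1\le n+Q-1$. With this, $\D(G)\le n+Q(G)-1$ holds in both ranges, and your subtraction step finishes the proof exactly as in the paper.
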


\begin{proof}
Put $n=\expn(G)$ and $Q=Q(G)$. If $n\ge\sqrt{|G|}$,
Bhowmik and Schlage-Puchta \cite[Theorem~1.1]{BhowmikSchlagePuchta2012}
give
\[
 \D(G)\le n+\frac{|G|}{n}-1=n+Q-1.
\]
If $n<\sqrt{|G|}$, the second assertion of the same theorem gives
\[
 \D(G)\le2\sqrt{|G|}-1=2\sqrt{nQ}-1\le n+Q-1.
\]
Thus
\begin{equation}\label{eq:D-nQ}
 \D(G)\le n+Q-1.
\end{equation}
Since the largest invariant factor of $G$ is $n$, one has
$\Ds(G)\ge n$. Subtracting proves \eqref{eq:universal-defect}.
\end{proof}

\begin{proof}[Proof of \Cref{thm:intro-mean-ratio}]

		Let
		\[
		R(G)=\frac{\Def(G)}{\Ds(G)}.
		\]
		Then $R(G)\ge0$ and
		\begin{equation}\label{eq:E-def}
			\sum_{\substack{[G]\\1<|G|\le X}}
			\frac{\D(G)}{\Ds(G)}
			=A(X)+E(X),
			\quad \text{where }
			E(X)=\sum_{\substack{[G]\\1<|G|\le X}}R(G).
		\end{equation}
		
By \Cref{lem:coexponent-parameterization}, write
\[
 G\cong H\oplus C_{te},\qquad q=|H|,\quad e=\expn(H),\quad r=\rank(H).
\]
If $H$ is trivial, then $G$ is cyclic and $R(G)=0$, so we may assume $q>1$.
Since $\Ds(G)\ge te$, \Cref{lem:coexponent-transfer} gives
$R(G)\le(q-1)/(te)$. Also $|G|\le\expn(G)^{\rank(G)}$, so
\Cref{thm:intro-uniform} gives
\[
 \D(G)<4.86\,\expn(G)\rank(G)=4.86\,te(r+1).
\]
Thus $R(G)\le4.86(r+1)$. Taking the geometric mean of these two bounds,
we obtain
\[
 R(G)\le\sqrt{\frac{4.86(q-1)(r+1)}{te}}.
\]
For fixed $H$, the order condition is $t\le X/(qe)$. Using
$\sum_{1\le t\le Z}t^{-1/2}\le2\sqrt Z$, we therefore have
\begin{align*}
 E(X)
 &\le\sqrt{4.86}\sum_{\substack{[H]\\q>1,\ qe\le X}}
 \sqrt{\frac{(q-1)(r+1)}e}
 \sum_{t\le X/(qe)}t^{-1/2}\\*
 &\le2\sqrt{4.86X}\sum_{\substack{[H]\\1<q\le X}}
 \frac{\sqrt{r+1}}e.
\end{align*}
Fix $\delta>0$. Since $r\le\log q/\log2$, we have
$\sqrt{r+1}\ll_\delta q^{\delta/2}$. Hence \Cref{lem:B-convergence}
yields
\[
 E(X)\ll_\delta\sqrt X
 \sum_{\substack{[H]\\1<q\le X}}\frac{q^{\delta/2}}e
 \le X^{1/2+\delta}\sum_{[H]}\frac1{|H|^{\delta/2}\expn(H)}
 =B(\delta/2)X^{1/2+\delta}.
\]
Consequently $E(X)=O_\delta(X^{1/2+\delta})$.
		Together with \eqref{eq:E-def}, this gives
\[
 \sum_{\substack{[G]\\1<|G|\le X}}
 \frac{\D(G)}{\Ds(G)}
 =A(X)+O_\delta(X^{1/2+\delta}).
\]
Finally, \eqref{eq:A-linear} gives $A(X)\asymp X$, and division by
$A(X)$ proves \eqref{eq:main-ratio}.

For the almost-all conclusion, fix $\varepsilon>0$. If
$\D(G)>(1+\varepsilon)\Ds(G)$, then
$R(G)>\varepsilon$. Hence
\[
 \varepsilon\#\{[G]:1<|G|\le X,\
 \D(G)>(1+\varepsilon)\Ds(G)\}\le E(X).
\]
Together with $E(X)=O_\delta(X^{1/2+\delta})$ and
$A(X)\asymp X$, this shows that the exceptional proportion is
$O_{\varepsilon,\delta}(X^{-1/2+\delta})$, and in particular tends to
zero.
\end{proof}

%\subsection{The typical order--exponent scale}

For every finite abelian group, let
	\[
	U(G)=\expn(G)\frac{\log|G|}{\log\expn(G)}.
	\]
	We first compare $U(G)$ with $\Ds(G)$, then use
\Cref{thm:intro-mean-ratio} to replace $\Ds(G)$ by $\D(G)$.
	
	\begin{lemma}\label{lem:Dstar-U}
		For every finite abelian group $G$,
		\begin{equation}\label{eq:Dstar-U}
			\Ds(G)\le U(G).
		\end{equation}
	\end{lemma}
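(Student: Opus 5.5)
The plan is to reuse the convexity argument from \Cref{lem:local-D-cost}, now applied to the invariant factors of $G$ rather than to a single Sylow subgroup. Throughout, $G$ is nontrivial, as fixed at the start of the paper, so that $n\coloneqq\expn(G)\ge2$ and $U(G)$ is defined.

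\emph{Step 1 (setup).} Write $G\cong C_{n_1}\oplus\cdots\oplus C_{n_r}$ with $1<n_1\mid\cdots\mid n_r=n$. Put
\[
 \tau=\frac{\log|G|}{\log n}=\sum_{i=1}^r\frac{\log n_i}{\log n}\ge1,
\]
so that $U(G)=n\tau$. The inequality $\tau\ge1$ holds because the summand with $i=r$ equals $1$ and every other summand is positive.

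\emph{Step 2 (termwise bound).} For each $i$, write $n_i=n^{s_i}$ with $s_i=\log n_i/\log n\in(0,1]$. The function $s\mapsto n^s$ is convex on $[0,1]$ and takes the values $1$ and $n$ at the endpoints. It therefore lies below its chord on $[0,1]$, which gives
\[
 n_i-1=n^{s_i}-1\le s_i(n-1).
\]
Summing over $i$ yields
\[
 \Ds(G)=1+\sum_{i=1}^r(n_i-1)\le1+(n-1)\sum_{i=1}^r s_i=1+(n-1)\tau.
\]

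\emph{Step 3 (conclusion).} Since $\tau\ge1$, we have $1+(n-1)\tau=n\tau-(\tau-1)\le n\tau=U(G)$. This proves \eqref{eq:Dstar-U}.

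There is no real obstacle here. The only point requiring care is the identity $\sum_i s_i=\tau$ together with the bound $\tau\ge1$, and both follow from the invariant-factor decomposition in Step 1. The argument is the same computation as in the $p$-group case at the start of the proof of \Cref{prop:distributional-order-exponent}. The only difference is that the chord inequality is applied to arbitrary divisors $n_i$ of $n$ in the form $n^{s}$, rather than to prime powers $p^{b}$ with $b\le a$.
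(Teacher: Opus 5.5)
Your proof is correct and is essentially the paper's argument. The key termwise inequality $n_i-1\le (n-1)\log n_i/\log n$ is the same in both; the paper obtains it from the monotonicity of $x\mapsto \log x/(x-1)$ on $(1,\infty)$ rather than from the chord bound for the convex function $s\mapsto n^s$, and the final summation to $1+(n-1)\tau\le n\tau$ is identical.
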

	
	\begin{proof}
		Write
		\[
		G\cong C_{n_1}\oplus\cdots\oplus C_{n_r},
		\qquad 1<n_1\mid\cdots\mid n_r=n=\expn(G),
		\]
		and put $\tau=\log|G|/\log n\ge1$. The function $x\mapsto\log x/(x-1)$ is strictly decreasing on $(1,\infty)$, since its derivative has the sign of
		\[
		\frac{x-1}{x}-\log x<0.
		\]
		Hence, for every $i$,
		\[
		n_i-1\le\frac{n-1}{\log n}\log n_i.
		\]
		Summing gives
		\[
		\Ds(G)
		\le1+\frac{n-1}{\log n}\sum_{i=1}^{r}\log n_i
		=1+(n-1)\tau
		\le n\tau
		=U(G).
		\]
	\end{proof}

	\begin{proposition}\label{prop:U-Dstar-mean}
		As $X\to\infty$,
		\begin{equation}\label{eq:U-Dstar-sharp-mean}
			\frac1{A(X)}
			\sum_{\substack{[G]\\1<|G|\le X}}
			\left(\frac{U(G)}{\Ds(G)}-1\right)
			\asymp\frac1{\log X}.
		\end{equation}
	\end{proposition}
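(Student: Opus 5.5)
The plan is to bound the sum from both sides, using the parametrization of \Cref{lem:coexponent-parameterization}. Write $G\cong H\oplus C_{te}$ with $q=|H|=Q(G)$ and $e=\expn(H)$. By \Cref{lem:Dstar-U} every summand $U(G)/\Ds(G)-1$ is nonnegative. Cyclic groups ($q=1$) contribute $0$, since $U(C_n)=n=\Ds(C_n)$. So assume $q>1$. Then $\expn(G)=te\ge2$, $\log|G|=\log q+\log(te)$, and
\[
 U(G)=te\Bigl(1+\frac{\log q}{\log(te)}\Bigr).
\]
Since $\Ds(G)\ge te$ and $U(G)\ge te$, we get
\[
 0\le\frac{U(G)}{\Ds(G)}-1\le\frac{U(G)-te}{te}=\frac{\log q}{\log(te)}\le\frac{\log q}{\log2}.
\]

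\emph{Lower bound.} Restrict to the family $G=C_2\oplus C_{2t}$ with $2\le t\le X/4$; there are $\gg X$ such groups. For these $\Ds(G)=2t+1$ and $U(G)=2t\bigl(1+\log2/\log(2t)\bigr)$. Hence
\[
 \frac{U(G)}{\Ds(G)}-1=\frac{2t\log2/\log(2t)-1}{2t+1}\gg\frac1{\log(2t)}\ge\frac1{\log X}
\]
once $t$ exceeds a fixed constant. The family alone therefore contributes $\gg X/\log X$. Dividing by $A(X)\asymp X$ (see \eqref{eq:A-linear}) gives the lower bound in \eqref{eq:U-Dstar-sharp-mean}.

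\emph{Upper bound.} Split the non-cyclic $G$ with $|G|\le X$ into three classes.
\begin{enumerate}
 \item[(i)] Groups with $q>\sqrt X$. Each summand is $\ll\log q\ll_\eta X^{\eta}$. By \eqref{eq:polynomial-coexponent-tail} with $Y=\sqrt X$, there are at most $B(\eta)X^{(1+\eta)/2}$ such groups. The contribution is $O(X^{1/2+2\eta})$, which is negligible.
 \item[(ii)] Groups with $q\le\sqrt X$ and $te<X^{1/4}$. Each summand is $\ll\log X$. For fixed $q$ the number of pairs $(H,t)$ is at most $a(q)X^{1/4}$, and $\sum_{q\le\sqrt X}a(q)\ll\sqrt X$. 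The contribution is $\ll X^{3/4}\log X$, again negligible.
 \item[(iii)] Groups with $te\ge X^{1/4}$. Here the summand is at most $\log q/\log(te)\le4\log q/\log X$. For fixed $H$, the number of admissible $t$ is at most $X/(qe)$. The total is at most
 \[
 \frac{4X}{\log X}\sum_{q\ge1}\frac{\beta(q)\log q}{q}\ll\frac{X}{\log X}.
 \]
 The last sum is finite because $\log q\ll q^{1/2}$ and $B(1/2)<\infty$ by \Cref{lem:B-convergence}.
\end{enumerate}
Summing the three classes and dividing by $A(X)\asymp X$ gives the upper bound $O(1/\log X)$.

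The main obstacle is that the natural bound $\log q/\log(te)$ is only $\ll1/\log X$ when $te$ is large; a naive summation over $t$ breaks down when $e$ is large and $t$ is small. Splitting at $te=X^{1/4}$ and bounding the small-exponent groups crudely by counting handles this. That counting step uses only $\sum_{q\le Y}a(q)\ll Y$, which follows from \eqref{eq:A-linear}.
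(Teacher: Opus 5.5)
Your proof is correct. The lower bound is the paper's argument: the family $C_2\oplus C_{2t}$, together with nonnegativity of all summands from \Cref{lem:Dstar-U}. For the upper bound you use the same pointwise estimate $0\le U(G)/\Ds(G)-1\le\log q/\log(te)$. You also use the same convergent series $\sum_q\beta(q)\log q/q$, controlled by $B(1/2)<\infty$, for the main term. The decomposition, however, is different.

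The paper splits fiber by fiber according to the size of the core, $qe\le X^{1/2}$ or $qe>X^{1/2}$. For long fibers it needs the averaged estimate $\sum_{t\le T}1/\log(et)\ll T/\log X$, proved by splitting at $\sqrt T$. For short fibers it uses $\log q/\log e\le\rank(H)$ together with the tail bound $\sum_{q>X^{1/4}}\beta(q)\log q/q\ll X^{-1/8}\log X$.

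You instead split according to the exponent $\expn(G)=te$ of the group itself. This makes the main-term bound $\log(te)\ge\frac14\log X$ pointwise and removes the need for the harmonic-logarithm estimate. The leftover groups are then handled by pure counting, where the crude bound $\log q/\log2$ is enough. For large coexponent you reuse \eqref{eq:polynomial-coexponent-tail}; for small coexponent and small exponent you use $\sum_{q\le Y}a(q)\ll Y$.

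Your route is slightly more economical because it recycles \Cref{prop:coexponent-tails}; the paper's route stays entirely inside the $H$-fibers. The one detail to make explicit is in class (i): fix $\eta$ small, say $\eta<1/4$, so that your bound $X^{1/2+2\eta}$ is indeed $o(X/\log X)$.
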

	
	\begin{proof}
		Fix a nontrivial finite abelian group
		\[
		H\cong C_{m_1}\oplus\cdots\oplus C_{m_s},
		\qquad e=\expn(H),
		\qquad q=|H|,
		\]
		and let
		\[
		a=\Ds(H)-1=\sum_i(m_i-1).
		\]
		The groups in the $H$-fiber are $G=H\oplus C_{te}$, with
		\[
		|G|=qet,
		\qquad
		\expn(G)=et,
		\qquad
		\Ds(G)=et+a.
		\]
		Consequently,
		\begin{equation}\label{eq:U-Dstar-fiber}
			\frac{U(G)}{\Ds(G)}
			=\frac{et}{et+a}\left(1+\frac{\log q}{\log(et)}\right),
		\end{equation}
		and \Cref{lem:Dstar-U} implies
		\begin{equation}\label{eq:U-Dstar-pointwise-fiber}
			0\le\frac{U(G)}{\Ds(G)}-1
			\le\frac{\log q}{\log(et)}.
		\end{equation}
		The trivial core $H=0$ produces cyclic groups, for which $U(G)=\Ds(G)$, so it contributes zero and may be omitted.
		
		Let
		\[
		T_H=\left\lfloor\frac{X}{qe}\right\rfloor
		\]
		and split the fibers according as $qe\le X^{1/2}$ or $qe>X^{1/2}$.
		
		If $qe\le X^{1/2}$, then $T_H\ge X^{1/2}-1$. Uniformly for $e\ge2$ and $T\ge X^{1/2}/2$,
		\begin{equation}\label{eq:log-harmonic}
			\sum_{t\le T}\frac1{\log(et)}\ll\frac{T}{\log X}.
		\end{equation}
		Indeed, split at $\sqrt T$: the first part is $O(\sqrt T)$, and on the second part $\log(et)\ge\frac12\log T\gg\log X$. Hence the total contribution of these fibers is
		\[
		\ll\frac{X}{\log X}
		\sum_{[H]}\frac{\log|H|}{|H|\expn(H)}
		\ll\frac{X}{\log X},
		\]
		where the last series converges by \Cref{lem:B-convergence}, since $q^{-1}\log q\ll q^{-1/2}$.
		
		For the remaining fibers, $qe>X^{1/2}$. Since $e\le q$ for nontrivial $H$, one has $q>X^{1/4}$. Also $q\le e^s$, so
		\[
		\frac{\log q}{\log e}\le s=\rank(H).
		\]
		Their total contribution is at most
		\[
		X\sum_{\substack{[H]\\qe>X^{1/2}}}
		\frac{\rank(H)}{qe}
		\ll X\sum_{q>X^{1/4}}\frac{\beta(q)\log q}{q}.
		\]
		For $Z$ sufficiently large, the function $q\mapsto(\log q)/q^{1/2}$ is decreasing, and therefore
		\[
		\sum_{q>Z}\frac{\beta(q)\log q}{q}
		\le B(1/2)\sup_{q>Z}\frac{\log q}{q^{1/2}}
		\ll Z^{-1/2}\log Z.
		\]
		With $Z=X^{1/4}$, the short-fiber contribution is $O(X^{7/8}\log X)=o(X/\log X)$. Since $A(X)\asymp X$, this proves the upper bound in \eqref{eq:U-Dstar-sharp-mean}.
		
		For the reverse bound, take the rank-two groups
		\[
		G_t=C_2\oplus C_{2t}.
		\]
		It is well-known (see \cite[Theorem~3.1]{GaoGeroldinger2006}) that
		\[
		\Ds(G_t)=\D(G_t)=2t+1,
		\]
		whereas
		\[
		U(G_t)=2t\frac{\log(4t)}{\log(2t)}.
		\]
		For $X/8\le t\le X/4$ and $X$ sufficiently large,
		\begin{align*}
			\frac{U(G_t)}{\Ds(G_t)}-1
			&=\frac{2t\log2-\log(2t)}{(2t+1)\log(2t)}
			\gg\frac1{\log X}.
		\end{align*}
		There are $\gg X$ such pairwise nonisomorphic groups, all of order at most $X$. Dividing by $A(X)\asymp X$ gives the matching lower bound.
	\end{proof}
	
	\begin{lemma}\label{lem:pointwise-U-envelope}
		Let $G$ be a finite abelian group. If $Q(G)\le \expn(G)$, then
		\begin{equation}\label{eq:pointwise-U-envelope}
			\D(G)\le U(G).
		\end{equation}
	\end{lemma}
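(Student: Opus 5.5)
The plan is to combine the universal bound \eqref{eq:D-nQ} from the proof of \Cref{lem:coexponent-transfer} with an elementary monotonicity comparison. Put $n=\expn(G)$ and $Q=Q(G)=|G|/n$. Then $\log|G|=\log n+\log Q$, so
\[
 U(G)=n\frac{\log n+\log Q}{\log n}=n+n\frac{\log Q}{\log n}.
\]
The bound of Bhowmik and Schlage-Puchta, in the form \eqref{eq:D-nQ}, gives $\D(G)\le n+Q-1$. It therefore suffices to prove
\[
 Q-1\le n\frac{\log Q}{\log n}\qquad(1\le Q\le n,\ n\ge2).
\]

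First I dispose of $Q=1$. Here both sides of this inequality vanish. Equivalently, $G$ is cyclic and $\D(G)=n=U(G)$.

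For $2\le Q\le n$, I will use the fact that $x\mapsto (x-1)/\log x$ is increasing on $(1,\infty)$. This is the reciprocal of the strictly decreasing function $x\mapsto \log x/(x-1)$ already used in the proof of \Cref{lem:Dstar-U}. Hence
\[
 \frac{Q-1}{\log Q}\le\frac{n-1}{\log n}<\frac{n}{\log n},
\]
and multiplying by $\log Q>0$ gives $Q-1<n\log Q/\log n$. Combining this with \eqref{eq:D-nQ} yields $\D(G)\le n+Q-1\le U(G)$, which is \eqref{eq:pointwise-U-envelope}.

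I do not expect a real obstacle. The only point needing care is that \eqref{eq:D-nQ} holds in both regimes of \cite[Theorem~1.1]{BhowmikSchlagePuchta2012}, namely $n\ge\sqrt{|G|}$ and $n<\sqrt{|G|}$. This was already verified in \Cref{lem:coexponent-transfer}. Note also that the hypothesis $Q\le n$ is exactly the condition $n\ge\sqrt{|G|}$, so only the first regime is actually needed here.
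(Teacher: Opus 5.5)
Your proposal is correct and follows essentially the same route as the paper. Both proofs use the Bhowmik--Schlage-Puchta bound $\D(G)\le n+Q-1$ in the regime $n\ge\sqrt{|G|}$, which is exactly the hypothesis $Q\le n$; both handle the cyclic case $Q=1$ separately; and both use the monotonicity of $(x-1)/\log x$ to obtain $Q-1\le n\log Q/\log n$, hence $n+Q-1\le U(G)$.
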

	
	\begin{proof}
		By Bhowmik and Schlage-Puchta \cite[Theorem~1.1]{BhowmikSchlagePuchta2012}, in the range $\expn(G)\ge\sqrt{|G|}$ one has
		\[
		\D(G)\le \expn(G)+Q(G)-1.
		\]
		The assumption $Q(G)\le \expn(G)$ is equivalent to $\expn(G)\ge\sqrt{|G|}$, so this estimate applies. If $Q(G)=1$, then $G$ is cyclic and $\D(G)=\expn(G)=U(G)$. Assume $1<Q(G)\le \expn(G)$. It is easy to see that the function
		\(
		f(x)=\frac{x-1}{\log x}
		\)
		is strictly increasing on $(1,\infty)$. Hence
		\[
		\frac{Q(G)-1}{\log Q(G)}
		\le\frac{\expn(G)-1}{\log \expn(G)}
		<\frac {\expn(G)}{\log \expn(G)},
		\]
		and therefore
		\[
		Q(G)-1\le \expn(G)\frac{\log Q(G)}{\log \expn(G)}.
		\]
		Since $|G|=\expn(G)Q(G)$,
		\[
		\expn(G)+Q(G)-1
		\le \expn(G)+\expn(G)\frac{\log Q(G)}{\log \expn(G)}
		=\expn(G)\frac{\log(\expn(G)Q(G))}{\log \expn(G)}
		=U(G).
		\]
	\end{proof}
	
	\begin{proposition}\label{prop:almost-all-U-envelope}
		For every $0<\delta<1/2$,
		\begin{equation}\label{eq:almost-all-U-envelope-count}
			\#\{[G]:1<|G|\le X,\ \D(G)>U(G)\}
			=O_\delta\!\left(X^{1/2+\delta}\right).
		\end{equation}
		In particular, $\D(G)\le U(G)$ for almost all finite abelian groups.
	\end{proposition}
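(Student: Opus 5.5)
By \Cref{lem:pointwise-U-envelope}, the inequality $\D(G)>U(G)$ can only occur when $Q(G)>\expn(G)$. So we only need to count the isomorphism classes $[G]$ with $1<|G|\le X$ and $Q(G)>\expn(G)$. Since $|G|=\expn(G)Q(G)$, this condition is equivalent to $Q(G)>\sqrt{|G|}$. It is also equivalent to $\expn(G)<\sqrt{|G|}\le\sqrt X$.

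Rather than use the tail bound of \Cref{prop:coexponent-tails} with a fixed $Y$, we count directly through the fibers of \Cref{lem:coexponent-parameterization}. Write $G\cong H\oplus C_{te}$ with $q=|H|=Q(G)$ and $e=\expn(H)$. Then $\expn(G)=te$, and the condition $Q(G)>\expn(G)$ becomes $t<q/e$. The order constraint $qte\le X$ gives $t\le X/(qe)$. Hence the number of admissible $t$ in the $H$-fiber is at most
\[
 \min\Bigl\{\frac{q}{e},\frac{X}{qe}\Bigr\}\le\frac{\sqrt{X}}{e},
\]
since $\min\{q,X/q\}\le\sqrt X$.

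Summing over all nontrivial $H$ with $q\le X$ (the trivial core gives cyclic $G$, for which $Q=1\le\expn(G)$, so it never contributes), the count is at most
\[
 \sqrt X\sum_{\substack{[H]\\1<|H|\le X}}\frac1{\expn(H)}
 \le\sqrt X\,X^{\delta}\sum_{[H]}\frac{1}{|H|^{\delta}\expn(H)}
 =B(\delta)X^{1/2+\delta}.
\]
Here we inserted $(X/q)^{\delta}\ge1$. The final series is finite by \Cref{lem:B-convergence}. This proves \eqref{eq:almost-all-U-envelope-count}. Dividing by $A(X)\asymp X$ from \eqref{eq:A-linear} gives the almost-all statement.

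The only delicate point is the initial reduction. \Cref{lem:pointwise-U-envelope} is stated under the hypothesis $Q(G)\le\expn(G)$, which is exactly the complement of the set being counted, so the reduction is immediate. After that, the balancing $\min\{q,X/q\}\le\sqrt X$ is the key step: it produces the exponent $1/2$ instead of the weaker bound $XY^{-1+\eta}$ that \Cref{prop:coexponent-tails} would give.
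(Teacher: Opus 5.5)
Your proof is correct and follows essentially the paper's argument: the same reduction via \Cref{lem:pointwise-U-envelope}, the same fiber count of at most $\frac1e\min\{q,X/q\}$ admissible $t$ from \Cref{lem:coexponent-parameterization}, and a final appeal to \Cref{lem:B-convergence}. The only difference is cosmetic: the paper splits the sum at $q=X^{1/2}$, weights each branch by $q^{\eta}$ to get $2B(\eta)X^{(1+\eta)/2}$, and takes $\eta=2\delta$, whereas you bound $\min\{q,X/q\}\le\sqrt X$ uniformly and then insert $(X/q)^{\delta}\ge1$, which is equally valid.
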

	
	\begin{proof}
		By \Cref{lem:pointwise-U-envelope}, every group counted on the left-hand side satisfies
		\[
		Q(G)>\expn(G).
		\]
		Use the unique coexponent parameterization
		\[
		G\cong H\oplus C_{t\expn(H)}.
		\]
		Write
		\[
		q=|H|=Q(G),\qquad e=\expn(H).
		\]
		Then
		\[
		\expn(G)=te,\qquad |G|=qte.
		\]
		The inequality $Q(G)>\expn(G)$ gives $t<q/e$, while $|G|\le X$ gives $t\le X/(qe)$. Thus, for fixed $H$, the number of possible integers $t$ is at most
		\[
		\frac1e\min\left\{q,\frac Xq\right\}.
		\]
		Summing over cores $H$ and using the definition of $\beta(q)$,
		\begin{equation}\label{eq:bad-U-split}
			\#\{[G]:1<|G|\le X,\ Q(G)>\expn(G)\}
			\le\sum_{q\ge1}\beta(q)\min\left\{q,\frac Xq\right\}.
		\end{equation}
		Fix $0<\eta<1$ and split at $q=X^{1/2}$. For $q\le X^{1/2}$,
		\[
		q\beta(q)
		=q^{1+\eta}\frac{\beta(q)}{q^\eta}
		\le X^{(1+\eta)/2}\frac{\beta(q)}{q^\eta}.
		\]
		For $q>X^{1/2}$,
		\[
		\frac{X\beta(q)}q
		=Xq^{\eta-1}\frac{\beta(q)}{q^\eta}
		\le X^{(1+\eta)/2}\frac{\beta(q)}{q^\eta}.
		\]
		Therefore \Cref{lem:B-convergence} yields
		\[
		\#\{[G]:1<|G|\le X,\ Q(G)>\expn(G)\}
		\le2B(\eta)X^{(1+\eta)/2}.
		\]
		Choose $\eta=2\delta$. This proves \eqref{eq:almost-all-U-envelope-count}. Since $A(X)\sim\mathfrak A X$, the exceptional proportion tends to zero.
	\end{proof}
	
	\begin{proof}[Proof of \Cref{thm:intro-global-scale}]

		Set
		\[
		A_G=\frac{\D(G)}{\Ds(G)},
		\qquad
		B_G=\frac{U(G)}{\Ds(G)}.
		\]
		By \eqref{eq:Dstar} and \Cref{lem:Dstar-U}, $A_G,B_G\ge1$. Hence
		\begin{align*}
			\left|\frac{U(G)}{\D(G)}-1\right|
			&=\left|\frac{B_G}{A_G}-1\right|\\
			&\le |B_G-A_G|\\
			&\le(A_G-1)+(B_G-1).
		\end{align*}
		The average of $A_G-1$ is $O(X^{-1/4})$ by
\Cref{thm:intro-mean-ratio}, taking $\delta=1/4$, while \Cref{prop:U-Dstar-mean} gives an $O(1/\log X)$ bound for the average of $B_G-1$. This proves the upper bound in \eqref{eq:main-typical}.
		
		For the reverse bound, we consider
		\[
		G_t=C_2\oplus C_{2t},\qquad X/8\le t\le X/4.
		\]
		As in the proof of \Cref{prop:U-Dstar-mean}, we have
		$
		\frac{U(G_t)}{\D(G_t)}-1
		\gg\frac1{\log X}$.
		There are $\gg X$ such pairwise nonisomorphic groups of order at most $X$. Since $A(X)\asymp X$, this gives the matching lower bound.

The almost-all assertion follows from Markov's inequality.
\end{proof}

Combining \Cref{prop:almost-all-U-envelope,thm:intro-global-scale}, for
every fixed $0<c<1$ one has
\[
 c\,U(G)<\D(G)\le U(G)
\]
for almost all finite abelian groups.

%\subsection{The additive difference}

Let $L:[1,\infty)\to\mathbb R$ satisfy $L(x)\to\infty$. Only finitely
many groups have $\Ds(G)$ in a fixed bounded interval. Hence changing $L$
on a bounded interval does not affect the assertion below, and we may
assume that $L$ is bounded below. Define
\begin{equation}\label{eq:Lstar}
 L^*(u)=\inf_{x\ge u}L(x).
\end{equation}
Then $L^*$ is finite and nondecreasing, $L^*(u)\le L(u)$, and
$L^*(u)\to\infty$.
	
	\begin{proposition}\label{prop:intrinsic-additive-upper-tail}
		Let
		\[
		E_L^*(X)=\#\{[G]:1<|G|\le X,\ \Def(G)>L(\Ds(G))\}.
		\]
		Then $E_L^*(X)=o(X)$. In particular,
		\begin{equation}\label{eq:intrinsic-additive-aa}
			\Def(G)\le L(\Ds(G))
		\end{equation}
		for almost all finite abelian groups.
	\end{proposition}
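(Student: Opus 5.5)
We will bound the exceptional set by splitting according to the coexponent $Q(G)$, using that $\Def(G)\le Q(G)-1$ by \Cref{lem:coexponent-transfer} and that $\Ds(G)\ge\expn(G)$ grows along each fiber of the parameterization in \Cref{lem:coexponent-parameterization}. First replace $L$ by $L^*$: since $L^*\le L$, the condition $\Def(G)>L(\Ds(G))$ implies $\Def(G)>L^*(\Ds(G))$. It therefore suffices to show
\[
\#\{[G]:1<|G|\le X,\ \Def(G)>L^*(\Ds(G))\}=o(X).
\]
(Recall that we have arranged $L$ to be bounded below, so $L^*$ is finite, nondecreasing, and tends to infinity.)

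Fix $\varepsilon>0$. By \Cref{prop:coexponent-tails}, choose $Y$ with $T(Y)<\varepsilon$. Then the groups with $Q(G)>Y$ number at most $XT(Y)<\varepsilon X$. For the remaining groups, $Q(G)\le Y$, so $\Def(G)\le Y-1$. Since $L^*(u)\to\infty$, there is $u_0$ such that $L^*(u)\ge Y$ for all $u\ge u_0$, and $L^*$ is monotone. Hence any exceptional group with $Q(G)\le Y$ must satisfy $\Ds(G)<u_0$.

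It remains to count such groups. Since $\Ds(G)\ge\expn(G)$ and $|G|\le\expn(G)^{\rank(G)}$ with $\rank(G)\le\Ds(G)-1$, both the exponent and the rank are bounded by $u_0$. Consequently $|G|$ is bounded in terms of $u_0$ alone, so only finitely many isomorphism classes qualify; equivalently, $|G|=Q(G)\expn(G)\le Yu_0$. This contributes $O_{\varepsilon}(1)$.

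Combining the two cases gives $\limsup_{X\to\infty} E_L^*(X)/X\le\varepsilon$. Since $\varepsilon>0$ was arbitrary, $E_L^*(X)=o(X)$, and dividing by $A(X)\sim\mathfrak A X$ from \eqref{eq:A-linear} yields the almost-all statement.

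The only delicate point is the reduction to $L^*$, together with the remark before the proposition that changing $L$ on a bounded interval affects only finitely many groups; this is what guarantees that $L^*$ is finite. Once that is in place, the argument is a direct consequence of the coexponent tail bound and should present no real obstacle.
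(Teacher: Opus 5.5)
Your proof is correct and follows the paper's argument. In both, the groups with $Q(G)>Y$ are bounded by $XT(Y)$. For the rest, $\Def(G)\le Q(G)-1$ together with the monotone $L^*$ forces $\Ds(G)$, and hence $|G|\le Q(G)\expn(G)$, to be bounded. The only difference is that the paper takes thresholds depending on $X$ ($u_X=\lceil X^{1/3}\rceil$, $Y_X\to\infty$) and gets $E_L^*(X)\le A(u_XY_X)+XT(Y_X)=o(X)$ directly. You instead fix $\varepsilon$ and show $\limsup_{X\to\infty}E_L^*(X)/X\le\varepsilon$, which is equivalent.
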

	
	\begin{proof}
		For sufficiently large $X$, put
		\[
		u_X=\lceil X^{1/3}\rceil,
		\qquad
		Y_X=\min\!\left\{\lfloor L^*(u_X)\rfloor+1,\ \lfloor X^{1/3}\rfloor\right\}.
		\]
		Then $Y_X\to\infty$,
		\[
		Y_X-1\le L^*(u_X),
		\qquad
		u_XY_X=O(X^{2/3})=o(X).
		\]
		Suppose that $G$ is counted by $E_L^*(X)$ and $Q(G)\le Y_X$. If $\Ds(G)\ge u_X$, then \Cref{lem:coexponent-transfer} gives
		\[
		\Def(G)\le Q(G)-1
		\le Y_X-1
		\le L^*(u_X)
		\le L(\Ds(G)),
		\]
		a contradiction. Hence every exceptional group with $Q(G)\le Y_X$ satisfies $\Ds(G)<u_X$.
		
		Since $\Ds(G)\ge\expn(G)$,
		\[
		|G|=\expn(G)Q(G)
		\le\Ds(G)Q(G)
		<u_XY_X.
		\]
		Thus every exceptional group either has order below $u_XY_X$ or satisfies $Q(G)>Y_X$. By \Cref{prop:coexponent-tails},
		\[
		E_L^*(X)\le A(u_XY_X)+XT(Y_X).
		\]
		The first term is $o(X)$ by \eqref{eq:A-linear}, and the second is $o(X)$ because $Y_X\to\infty$ and $T(Y)\to0$. Hence $E_L^*(X)=o(X)$. Dividing by $A(X)\sim\mathfrak A X$ proves the assertion.
	\end{proof}
	
%We shall use Liu's lower bound \cite[Theorem~4.3]{Liu2020}; his function $\theta(\ell;p)$ is defined in \cite[Section~2]{Liu2020}.

The following result was proved in \cite{Liu2020}. For the notation $\theta(\ell)$ and $\theta(\ell;p)$, which will be used later, we refer to Section 2 of that paper.

\begin{lemma}[\cite{Liu2020}, Theorem~4.3]\label{LowerDG}
Let \(G = C_n^r \oplus C_{kn}\) be a non-\(p\)-group with \(n, k \ge 2\). Let \(p\) be a prime divisor of \(n\), \(k_1 \in \mathbb{Z}_{\ge 2}\) be a divisor of \(k\), with \(\gcd(p, k_1) = 1\), and \(kn = k_1m\) for some \(m \in \mathbb{Z}_{>0}\). If \(\ell \in \mathbb{Z}_{>0}\) and \(t \in [1, k_1 - 1]\) with \(r \ge t\theta(\ell) \ge 1\), then
\[
\mathsf{D}(G) \ge \mathsf{D}^*(G) + t(p - 1)\ell - tm.
\]
\end{lemma}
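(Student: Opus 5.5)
Since this lemma is quoted from Liu \cite[Theorem~4.3]{Liu2020} and we do not reproduce the definition of $\theta(\ell)$ here, the following is a reconstruction of the kind of argument one expects, not a summary of Liu's proof. The places where it depends on the definition of $\theta(\ell;p)$ are marked as guesses.

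Since $\D(G)-1$ is the maximal length of a zero-sum free sequence, the plan is to build an explicit zero-sum free sequence over $G=C_n^r\oplus C_{kn}$ of length
\[
\Ds(G)-1+t(p-1)\ell-tm .
\]
The starting point is the standard extremal sequence of length $\Ds(G)-1$: the basis element $e_i$ of each $C_n$ taken $n-1$ times, and a generator $e_0$ of $C_{kn}$ taken $kn-1$ times. I would then modify this sequence in $t$ disjoint blocks.

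First, I would split the $r$ copies of $C_n$ into $t$ groups of $\theta(\ell)$ coordinates each, plus the unused rest; this is possible because $r\ge t\theta(\ell)$. Inside the $j$-th group, the $p$-torsion part $(n/p)C_n^{\theta(\ell)}\cong C_p^{\theta(\ell)}$ carries the gadget that defines $\theta(\ell;p)$. My guess is that this gadget is a configuration of elements of $C_p^{\theta(\ell)}$ of total length $(p-1)\ell$ more than the $p$-part of the trivial construction, whose only zero-sum subsequences have a prescribed rigid shape. I would lift these elements to $C_n^{\theta(\ell)}$ and attach to each of them a component in $C_{kn}$ of order dividing $k_1$. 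Let $g_0=m e_0$, which has order $k_1$ because $kn=k_1m$. Block $j$ receives the coefficient $g_0$, so that every nontrivial "gadget zero-sum" in block $j$ creates a residue $g_0$ in the cyclic coordinate. To keep the total sum budget consistent, the multiplicity of $e_0$ is reduced from $kn-1$ to $kn-1-tm$. This is where the loss $-tm$ comes from, and the gain $+t(p-1)\ell$ comes from the $t$ gadgets.

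Second, I would verify zero-sum freeness. Take a zero-sum subsequence $T$ and project it onto the $C_n$-coordinates of each block. Because $\gcd(p,k_1)=1$, the conditions on the $p$-primary part and on the $k_1$-part of the cyclic coordinate decouple. By the defining property of $\theta(\ell;p)$, each block either contributes nothing or contributes a full gadget zero-sum, and the latter adds exactly $g_0$ to the $C_{kn}$-coordinate. So the $C_{kn}$-coordinate of $\sigma(T)$ equals $s g_0+u e_0$, where $0\le s\le t<k_1$ counts the active blocks and $0\le u\le kn-1-tm$ is the number of copies of $e_0$ used. For this to vanish we need $sm+u\equiv 0 \pmod{kn}$. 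Since $0\le sm+u\le tm+kn-1-tm<kn$, this forces $s=u=0$. With no active block and no copy of $e_0$, the remaining elements are the standard free part, which is zero-sum free, so $T$ is empty.

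The main obstacle is the first half of the second step: pinning down precisely which definition of $\theta(\ell;p)$ guarantees that each block has only the "all or nothing" zero-sum behaviour. A second, related difficulty is that this behaviour must survive lifting from $C_p^{\theta(\ell)}$ to $C_n^{\theta(\ell)}$ while the standard free part sits in the same coordinates. I would first try to define the block elements as $(n/p)$-multiples of the gadget vectors, and put the ordinary $e_i$'s with reduced multiplicity alongside them. I would then check the free part by a carry argument on base-$p$ digits, in the style of Olson's $p$-group proof behind \eqref{eq:olson-formula}.
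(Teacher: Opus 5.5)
The paper does not prove this lemma; it quotes it from \cite[Theorem~4.3]{Liu2020}, where $\theta(\ell;p)$ is defined. Judged as a proof, your proposal has a genuine gap. The whole content of the lemma sits in the block you call a gadget: $(n-1)\theta(\ell)+(p-1)\ell$ terms in $C_n^{\theta(\ell)}\oplus C_{kn}$ whose nonempty zero-sums in the block coordinates all have cyclic part exactly $g_0=me_0$. You postulate this block instead of constructing it. Hence the hypothesis $r\ge t\theta(\ell)\ge1$ is never used, and the gain $t(p-1)\ell$ is assumed. What you do prove is the bookkeeping in $C_{kn}$, namely $0\le sm+u\le kn-1$. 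That part is correct, and $t\le k_1-1$ keeps the multiplicity $kn-1-tm$ nonnegative. Also, $\gcd(p,k_1)=1$ is not a decoupling device, since $C_{kn}$ need not split off $C_{k_1}$.

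To see what is missing, take the case the paper uses: $G=C_2^{4t}\oplus C_{2k}$ with $k$ odd, $t\le k-1$, $n=p=2$, $\ell=3$, $k_1=k$, $m=2$. In each block $C_2^4$ take the seven columns $e_1,e_2,e_3,e_4,e_1+e_2+e_4,e_1+e_3+e_4,e_2+e_3+e_4$ of a generator matrix of the $[7,4]$ Hamming code. Give each the same cyclic component $y$ with $4y=2e_0$, which is solvable exactly because $k$ is odd; this is where coprimality enters. The nonempty zero-sum subsets of these columns are the supports of the nonzero words of the dual $[7,3]$ simplex code, all of size $4$. So an active block contributes exactly $4y=g_0$. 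Adding $e_0^{2k-1-2t}$, your counting shows that the resulting sequence of length $\Ds(G)-1+t$ is zero-sum free. For general $p$, and for $n>p$, an analogous block has to be built, and nothing in your text does so.

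Moreover, the lift you suggest for $n>p$ fails. Since $n/p\le n-p$, the ordinary terms include $e_i^{n/p}$. Its sum $(n/p)e_i$ lies in the socle $(n/p)C_n^{\theta(\ell)}$ that carries the gadget. So zero-sums in the block coordinates need not be gadget zero-sums, and your second step does not apply.

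For $n\ge p^2$ the construction breaks outright. Every sub-multiset of gadget terms sums, in the block coordinates, to an element of the socle. That element is cancelled by adding $e_i^{b_in/p}$ for suitable $0\le b_i\le p-1$, which is available because $(p-1)n/p\le n-p$. The attached components lie in $\langle g_0\rangle\cong C_{k_1}$ and $\D(C_{k_1})=k_1$. So as soon as a block has at least $k_1$ gadget terms, some nonempty sub-multiset also has cyclic part $0$, which gives a nonempty zero-sum in $G$. For instance, take $C_4^4\oplus C_{20}$ (so $k=k_1=5$, $m=4$) and $y=16e_0$, hence $4y=g_0$. Then a gadget term $(2u,y)$ with $u\in\{0,1\}^4$, the terms $e_i^2$ for $i\in\operatorname{supp}(u)$, and $e_0^4$ sum to $0$.

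Nor is there a cheap reduction to $n=p$. Splitting through the socle by $\D(G)\ge\D(H)+\D(G/H)-1$ loses $(p-1)(n/p-1)\theta(\ell)$ terms per block compared with the claimed bound. So the case $n>p$ needs a genuinely different block, and that is exactly the part your proposal leaves open.
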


\begin{proposition}\label{prop:positive-density-defect}
		For every real constant $C\ge0$,
		\begin{equation}\label{eq:positive-density-defect}
			\liminf_{X\to\infty}
			\frac{\#\{[G]:1<|G|\le X,\ \Def(G)>C\}}{A(X)}
			\ge\frac1{64\mathfrak A\,16^C}>0.
		\end{equation}
	\end{proposition}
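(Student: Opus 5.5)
We will build an explicit family of groups to which \Cref{LowerDG} applies, count its members of order at most $X$, and compare the count with $A(X)\sim\mathfrak A X$. Put $p=2$ and $\ell=\lceil C\rceil+1$, so that $(p-1)\ell-m$ must exceed $C$. The natural shape is $G=C_n^r\oplus C_{kn}$ with $n=2$ and $k=k_1 m'$, where $k_1$ is odd. Taking $t=1$ forces $k_1\ge 2$; we choose $k_1=3$. Then $kn=3m$, so $m=2k/3$. This is much too large, since the bound in \Cref{LowerDG} gives $\ell-m$, which is negative for large $k$.

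So \Cref{LowerDG} has to be used with $m$ small. Since $kn=k_1m$, this means $k_1$ must be large, comparable to $kn$. The correct parametrisation is therefore the following. Fix $p=2$, $n=2$, and a small fixed $m$ (for instance $m=2$). Set $k_1=kn/m=k$, and require $k$ to be odd so that $\gcd(2,k_1)=1$. Choose $t\in[1,k_1-1]$ and $\ell$ with $r\ge t\theta(\ell)\ge1$; the resulting defect is $t((p-1)\ell-m)=t(\ell-2)$. Taking $t=1$ and $\ell=\lceil C\rceil+3$ makes this defect exceed $C$, provided $r\ge\theta(\ell)$.

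This gives the family
\[
G=C_2^{\,r}\oplus C_{2k},\qquad k\ \text{odd},\ k\ge3,\qquad r=r_0,
\]
where $r_0\ge\theta(\ell)$ is a fixed rank. Each such $G$ is a non-$p$-group, and \Cref{LowerDG} gives $\Def(G)\ge \ell-2>C$. The family has $|G|=2^{r_0+1}k\le X$, so the number of odd $k$ allowed is about $X/2^{r_0+2}$. Dividing by $A(X)\sim\mathfrak AX$ shows the lower density is at least roughly $1/(\mathfrak A\,2^{r_0+2})$.

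To reach the stated constant $1/(64\mathfrak A\,16^C)$, we need a bound of the form $\theta(\ell)\le 4\ell+O(1)$, or something similar, so that $2^{r_0+2}\le 64\cdot16^C$ when $\ell\approx C+3$. For example, $\ell=\lfloor C\rfloor+3$ and $r_0=4\lfloor C\rfloor+4$ give $2^{r_0+2}=64\cdot 16^{\lfloor C\rfloor}\le 64\cdot16^C$. It is also possible that with $p=2$ the natural choice is $m=1$, $n=2$, $k_1=2k$, but then $\gcd(p,k_1)=1$ fails. That forces $m=2$ with $k$ odd, which is consistent with the factor $\ell-2$ above.

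\textbf{Main obstacle.} The crux is to extract from \cite[Section~2]{Liu2020} an explicit upper bound for $\theta(\ell)=\theta(\ell;2)$. I expect $\theta(\ell;2)$ to grow linearly in $\ell$, and the constant $16^C=2^{4C}$ suggests a bound like $\theta(\ell)\le 4(\ell-2)$. I would first look up that definition and verify the linear bound, then fix $r_0$ accordingly. The remaining steps are routine. The groups $C_2^{r_0}\oplus C_{2k}$ are pairwise nonisomorphic for distinct $k$. They are in invariant-factor form, since $2\mid 2k$. The count of odd $k\le X/2^{r_0+1}$ is $X/2^{r_0+2}+O(1)$. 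Taking the $\liminf$ after dividing by $A(X)$ finishes the proof.
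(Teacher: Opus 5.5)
There is a genuine gap, and it is exactly the step you flagged as the main obstacle. With $t=1$, the parameter $\ell$ has to carry the whole defect, so you need $\theta(\ell;2)\le 4(\ell-2)$ for $\ell=\lfloor C\rfloor+3$. But $\theta(\ell;2)$ does not grow linearly. The paper's own evaluation $\theta(3;2)=2^3-1-3=4$ is the case $\ell=3$ of $\theta(\ell;2)=2^{\ell}-1-\ell$, so already $\theta(4;2)=11>8$. With $t=1$ you are therefore forced to take $r_0\ge 2^{\ell}-1-\ell$. This still gives positive lower density, but the density is doubly exponentially small in $C$. The stated constant then fails already at $C=1$: you would get $1/(2^{13}\mathfrak A)$, whereas $1/(64\mathfrak A\,16^C)=1/(2^{10}\mathfrak A)$ is required.

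The missing idea is to exploit the linearity of \Cref{LowerDG} in $t$ instead of increasing $\ell$. Keep $\ell=3$ fixed. There $\theta(3;2)=4$, and each unit of $t$ gains $(p-1)\ell-m=3-2=1$. Now take $t=s=\lfloor C\rfloor+1$. The hypothesis $r\ge t\theta(\ell)$ becomes $r\ge 4s$, which is linear in $s$. The hypothesis $t\le k_1-1$ (with $k_1=k$) becomes $k>s$. Your family $C_2^{4\lfloor C\rfloor+4}\oplus C_{2k}$ with $k$ odd is in fact exactly the paper's family $C_2^{4s}\oplus C_{2k}$; only the parameters in Liu's lemma need changing. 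With this choice \Cref{LowerDG} gives $\Def(G_k)\ge 3s-2s=s>C$. Counting odd $k>s$ with $2^{4s+1}k\le X$ gives $X/2^{4s+2}+O_C(1)$ groups, and $2^{4s+2}=4\cdot16^s\le64\cdot16^C$ yields the stated bound. Only the single value $\theta(3;2)=4$ is used, so no growth estimate for $\theta$ is needed.
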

	
	\begin{proof}
		Let $s=\lfloor C\rfloor+1$. For every odd integer $k>s$, consider
		\begin{equation}\label{eq:Gk}
			G_k=C_2^{4s}\oplus C_{2k}.
		\end{equation}
		We apply \Cref{LowerDG}, choose
		\[
		n=p=2,
		\qquad k_1=k,
		\qquad m=2,
		\qquad \ell=3,
		\qquad t=s,
		\qquad r=4s.
		\]
		First, $G_k=C_n^r\oplus C_{kn}$ with $n=2$. Since $k>1$ is odd, $G_k$ is not a $p$-group. The prime $p=2$ divides $n$, while $k_1=k$ divides $k$, satisfies $k_1\ge2$, and is relatively prime to $p$. Moreover, $kn=2k=k_1m$. For $p=2$ and $\ell=3$ one has
		\[
		\theta(3;2)=2^3-1-3=4.
		\]
		Consequently,
		\(
		r=4s=s\theta(3;2)=t\theta(\ell;p).
		\)
		Finally, $1\le t=s\le k-1=k_1-1$ because $k>s$. Therefore \Cref{LowerDG} gives
		\begin{equation}\label{eq:Liu-defect}
			\D(G_k)-\Ds(G_k)
			\ge t(p-1)\ell-tm
			=s(2-1)3-2s=s.
		\end{equation}
		Since $s>C$, every such $G_k$ has defect larger than $C$.
		
		The groups $G_k$ are pairwise nonisomorphic because their exponents are the distinct integers $2k$, and
		\begin{equation}\label{eq:Gk-order}
			|G_k|=2^{4s+1}k.
		\end{equation}
		Clearly, the number of odd integers $k>s$ with $|G_k|\le X$ is
		\begin{equation}\label{eq:k-count}
			\frac{X}{2^{4s+2}}+O_C(1).
		\end{equation}
		Using $A(X)\sim\mathfrak A X$ from \eqref{eq:A-linear}, equations \eqref{eq:Liu-defect}--\eqref{eq:k-count} imply
		\[
		\liminf_{X\to\infty}
		\frac{\#\{[G]:1<|G|\le X,\ \Def(G)>C\}}{A(X)}
		\ge\frac1{\mathfrak A2^{4s+2}}.
		\]
		Finally, $s\le C+1$, so
		\[
		2^{4s+2}=4\cdot16^s\le64\cdot16^C.
		\]
		This completes the proof.
	\end{proof}

\medskip
\noindent\textbf{Statement of AI.} The paper was written by the authors.
ChatGPT was used to assist with routine calculations and checks, language
editing, and reorganization of the manuscript.

\medskip
\noindent\textbf{Acknowledgment.} This work was supported by Guangdong
Basic and Applied Basic Research Foundation Grant No.~2024A1515012564 and
the National Natural Science Foundation of China Grant No.~12571578.

\end{document}